\def\1{{\mathchoice {\rm 1\mskip-4mu l} {\rm 1\mskip-4mu l}
{\rm 1\mskip-4.5mu l} {\rm 1\mskip-5mu l}}}
\newcommand{\RR}{\mathbb{R}}
\newcommand{\eps}{\varepsilon}
\newcommand{\NN}{\mathbb{N}}
\newcommand{\PP}{\mathbb{P}}
\newcommand{\TT}{\mathbb{T}}
\newcommand{\ZZ}{\mathbb{Z}}
\newcommand{\EE}{\mathbb{E}}
\newcommand{\Bb}{\mathcal{B}}
\newcommand{\ee}{\mathrm{e}}
\newcommand{\De}{\mathrm{d}}
\renewcommand{\i}{{\imath}}
\newtheorem{theorem}{Theorem}
\newtheorem{proposition}[theorem]{Proposition}
\newtheorem{lemma}[theorem]{Lemma}
\newtheorem{remark}[theorem]{Remark}
\theoremstyle{definition}
\newtheorem{definition}{Definition}[section]
\numberwithin{equation}{section}
\title[Scaling in divisible sandpiles: a Fourier multiplier approach]{Scaling limits in divisible sandpiles: a Fourier multiplier approach}
\author[A. Cipriani]{Alessandra Cipriani}
\author[J. de Graaff]{Jan de Graaff}
\author[W.~M.~Ruszel]{Wioletta M.~Ruszel}
\address{TU Delft (DIAM), Building 28, van Mourik Broekmanweg 6, 2628 XE, Delft, The Netherlands}
\email{A.Cipriani@tudelft.nl, Jan\_de\_Graaff@hotmail.com, W.M.Ruszel@tudelft.nl}
\thanks{The first author acknowledges the support of the grant 613.009.102 of the Netherlands Organisation for
Scientific Research (NWO). The first and third author would like to thank Leandro Chiarini and Rajat Subhra Hazra for helpful discussions.}
\begin{document}
\begin{abstract}
In this paper we investigate scaling limits of the odometer in divisible sandpiles on $d$-dimensional tori \replaced[id=ale3,remark={nr1}]{following up}{generalising} the works \cite{CHR17,LongRange,CHRheavy}. Relaxing the assumption of independence of the weights of the divisible sandpile, we generate generalised Gaussian fields in the limit by specifying the Fourier multiplier of their covariance kernel. In particular, using a Fourier multiplier approach, we can recover fractional Gaussian fields of the form $(-\Delta)^{-s/2} W$ for $s>2$ and $W$ a spatial white noise on the $d$-dimensional unit torus.
\end{abstract}
\keywords{Divisible sandpile, Fourier analysis, generalised Gaussian field, abstract Wiener space}
\subjclass[2000]{31B30, 60J45, 60G15, 82C20}
\maketitle

\section{Introduction and main results}

Gaussian random fields arise naturally in the study of many statistical physical models. In particular fractional Gaussian fields $FGF_s(D):=(-\Delta)^{-s/2} W$, where $W$ denotes a spatial white noise, $s\in \RR$ and $D\subset \mathbb{R}^d$, typically arise in the context of random phenomena with long-range dependence and are closely related to renormalization. Examples of fractional Gaussian fields include the Gaussian free field and the continuum bi-Laplacian model. We refer the reader to~\cite{LSSW} and references therein for a complete survey on fractional Gaussian fields.
In this paper we study a class of divisible sandpile models and show that the scaling limit of its odometer functions converges to a Gaussian limiting field indexed by a Fourier multiplier. 

The divisible sandpile was introduced by~\cite{LevPer,LePe10} and it is defined as follows. A divisible sandpile configuration on the discrete torus $\ZZ_n^d$ of side-length $n$ is a function $s : \ZZ_n^d \to\RR$, where $s(x)$ indicates a mass of particles or a hole at site $x$. 
Note that here, unlike the classical Abelian sandpile model~\citep{BTW,JaraiCornell}, $s(x)$ is a real-valued number. 
Given $(\sigma(x))_{x\in \ZZ_n^d}$ a sequence of centered (possibly correlated) multivariate Gaussian random variables, we choose $s$ to be equal to 
\begin{equation}\label{eq:def_s}
s(x) = 1 + \sigma(x) - \frac{1}{n^d} \sum_{z\in \mathbb{Z}^d_n} \sigma(z).
\end{equation}
If a vertex $x \in \ZZ_n^d$ is unstable, i.e. $s(x)> 1$, it topples by keeping mass $1$ for itself and distributing the excess $s(x) -1$ uniformly among its neighbours. At each discrete time step, all unstable vertices topple simultaneously. The configuration $s$ defined as \eqref{eq:def_s} will stabilize to the all 1 configuration. The odometer $u_n:\mathbb{Z}^d_n\rightarrow \mathbb{R}_{\ge 0}$ collects the information about all mass which was emitted from each vertex in $\mathbb{Z}^d_n$ during stabilization.
Our main theorem states that $u_n$, properly rescaled, converges to a Gaussian random field in some appropriate Sobolev space.

In \cite{CHR17} the authors consider divisible sandpiles with nearest-neighbor mass distribution, and show that for any configuration $s$ given by \eqref{eq:def_s} where the $\sigma$'s are i.i.d. with finite variance the limiting odometer is a bi-Laplacian Gaussian field $(-\Delta)^{-1} W$ on the unit torus $\TT^d$ (or $FGF_2(\TT^d)$ in the notation of~\cite{LSSW}). Relaxing the second moment assumption on $\sigma$ leads to limiting fields which are no longer Gaussian, but alpha-stable random fields, see \cite{CHRheavy}. On the other side if one keeps the second moment assumption and instead redistributes the mass upon toppling to all neighbours following the jump distribution of a long-range random walk one can construct fractional Gaussian fields with $0< s\leq 2$ (\cite{LongRange}). To summarize, Gaussian fields appear under the assumption of finite second moments in the initial configuration, while tuning the redistribution of the mass leads to limiting interfaces with smoothness which is at most the one of a bi-Laplacian field. 

One natural question arises: what kind of sandpile models give rise to odometer interfaces which are smoother that the bi-Laplacian? It turns out that to obtain limiting fields of the form $(-\Delta)^{-s/2} W$ such that $s>2$ the long-range dependence must show up in the initial Gaussian multivariate variables $\sigma$ rather than in the redistribution rule.
The novelty of the present article is that it complements \cite{LongRange,CHR17, CHRheavy} by removing the assumption of independence of the weights (in the Gaussian case), and in addition provides an example of a model defined on a discrete space which scales to a limiting field $(-\Delta)^{-s/2} W$ such that $s>2$. Note \added[id=ale2]{that for the fractional case with $s>2$ one does not have the aid of explicit integral representations for the eigenvalues, and therefore constructing the continuum field via a discrete approximation requires new approaches. To the best of the authors' knowledge this article is the first instance of such a construction.}

{In the proof, we start by defining a sequence of covariance matrices $(K_n)_{n\in\NN}$ for the weights of the sandpile. Their Fourier transform $\widehat{K_n}$ is assumed to have a pointwise limit $\widehat K$ as $n$ goes to infinity. Under suitable regularity assumptions, $\widehat K$ defines the Fourier multiplier of the covariance kernel of the limiting field. A key idea in the proof is to absorb the multiplier $\widehat K$ into the definition of the abstract Wiener space.} {This defines a new Hilbert space where we will construct the limiting field.}
Note that this approach is different from the one in \cite{CHR17, LongRange}, where the covariance structure of the odometer was given. Furthermore we would like to stress that the scaling factor $a_n\sim n^{-2}$ used for convergence (see Theorem~\ref{main}) is dimension-independent in contrast to the above mentioned works. This follows from the fact that, being the $\sigma$'s correlated, the dimensional scaling is absorbed in the covariance structure of the odometer (see Lemma~\ref{covv}).

Let us finally remark that depending on the parameters $s,\,d$ the limiting field will be either a random distribution or a random continuous function. More precisely, if the Hurst parameter $H$ of the $FGF_s$  field
\[
 H:=s-\frac{d}{2}
\]
is strictly negative, then the limit is a random distribution while for $H\in (k,\,k+1)$, $k\in \NN\cup\{0\}$, the field is a $(k-1)$-differentiable function (\cite{LSSW}, with the caveat that the results presented therein are worked out for $\RR^d$ or domains with zero boundary conditions).
In the case of $H\ge 0$, a stronger result could be pursued, namely an invariance principle \`a-la Donsker (as for example in~\citet[Theorem~2.1]{CDHMem},~\citet[Theorem~3]{CDHMix}).  To keep the same outline for all proofs we will treat the limiting field a priori as a random distribution and thus prove finite dimensional distribution convergence by testing the rescaled odometer against suitable test functions.

%%%%%%%%%%%%%%%%%%%%%%%%%%%%%%%%%%%%%%%%%%%
\subsection{Main result}
\paragraph{\em Notation}
In all that follows, we will consider $d\ge 1$. We are going to work with the spaces $\ZZ_n^d:=[-n/2,\,n/2]^d\cap\ZZ^d$, the discrete torus of side-length $n$, and $\TT^d:=[-1/2,\,1/2]^d$, the $d$-dimensional  torus. Moreover let $B(z,\,\rho)$ a ball centered at $z$ of radius $\rho>0$ in the $\ell^\infty$-metric. We will use throughout the notation $z\cdot w$ for the Euclidean scalar product between $z,\,w\in \RR^d$. We will let $C,\,C',\,\,c\ldots$ be positive constants which may change from line to line within the same equation. We define the Fourier transform of a function $f\in L^1(\TT^d)$ as $\widehat f(y):=\int_{\TT^d}f(z)\exp\left(-2\pi\i y\cdot z\right)\De z$ for $y\in \ZZ^d$. We will use the symbol $\,\widehat \cdot\,$ to denote also Fourier transforms on $\ZZ_n^d$ and $\RR^d$ (cf. Subsection~\ref{subsec-fourier-torus} for the precise definitions). 
\vspace{\baselineskip}

We can now state our main result. We consider the piecewise interpolation of the odometer on small boxes of radius $1/ 2n$ and show convergence to the limiting Gaussian field $\Xi^K$ depending on the covariance $K_n$ of the initial sandpile configuration. This field can be represented in several ways: a convenient one is to let it be the Gaussian field with characteristic functional
\begin{equation}\label{eq:def_Xi}
	\Phi(f):=\exp\left(-\frac{\|f\|_{K}^2}{2}\right)
\end{equation}
where $f$ belongs to the Sobolev space $H_K^{-1}(\TT^d)$ with norm
\[
\|f\|^{\added[id=wio,remark=]{2}}_{K}:=\sum_{\xi\in\ZZ^d\setminus\{0\}} \widehat{K}(\xi) \|\xi\|^{-4} |\widehat{f}(\xi) |^2.
\]
We will give the analytical background to this definition in Subsection~\ref{subsec:AWS}. Note that we index the norm and the Sobolev space by the Fourier multiplier $K$.
\begin{theorem}\label{main}
For $n\in \mathbb{N}$, consider an initial sandpile configuration defined by \eqref{eq:def_s} where the collection of centered Gaussians $(\sigma(x))_{x\in\ZZ_n^d}$ has covariance
\[ \EE[\sigma(x) \sigma(y)] = K_n(x-y). \]
Assume that $\widehat{K_n}$, the Fourier transform of $K_n$ on $\ZZ_n^d$, satisfies
\[
 \sup_{n\in\NN}\sup_{\xi\in\ZZ^d}\widehat{K_n}(\xi)<\infty, 
\]
and that 
\begin{equation}\label{eq:K_hat}\lim_{n\rightarrow \infty} \widehat{K_n}(\xi)=:\widehat K(\xi) >0,\quad\xi\in\ZZ^d\end{equation} 
exists.
Let $(u_n(x))_{x\in\ZZ_n^d}$ be the odometer associated with the collection $(\sigma(x))_{x\in\ZZ^d_n}$ via~\eqref{eq:def_s}. 
Let furthermore $a_n:= 4\pi^2(2d)^{-1} n^{-2}$. We define the formal field on $\TT^d$ by
\[ \Xi^K_n(x):=  \sum_{z\in\TT_n^d} u_n(nz) \1_{B(z,\frac{1}{2n})}(x), \ \ \ x\in\TT^d. \]
Then $a_n \Xi^K_n(x) $ converges in law as $n\to\infty$ to $\Xi^K$ in the topology of the Sobolev space $H_K^{-\eps}(\TT^d)$, where 
\[
 \eps>\max\left\{\frac{d}{2},\,\frac{d}{4}+1\right\}
\]
(for the analytical specification see Subsection~\ref{subsec:AWS}). \added[id=wio,remark=]{The field $\Xi^K$ and the space $H^{-\eps}_K(\TT^d)$ depend on $K$, the inverse Fourier transform of $\widehat{K}$ as in \eqref{eq:K_hat}.}
\end{theorem}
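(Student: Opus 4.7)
The plan is to exploit the explicit Fourier representation of the odometer. The toppling dynamics characterise $u_n$ as the mean-zero solution of the discrete Poisson problem $\Delta_n u_n = 1-s$, where $\Delta_n$ is the normalised nearest-neighbour Laplacian on $\ZZ_n^d$. Diagonalising by the characters $\ee^{2\pi\ii\xi\cdot x/n}$ yields eigenvalues $\lambda_n(\xi)=-d^{-1}\sum_j(1-\cos(2\pi\xi_j/n))$ and, for $\xi\in\ZZ_n^d\setminus\{0\}$,
\[
\widehat{u_n}(\xi) \;=\; -\widehat{\sigma}(\xi)/\lambda_n(\xi), \qquad \EE\bigl[|\widehat{u_n}(\xi)|^2\bigr]\;\propto\; \widehat{K_n}(\xi)/|\lambda_n(\xi)|^2,
\]
since $\sigma$ is centred Gaussian with covariance $K_n$. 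A direct computation then relates the $\TT^d$-Fourier coefficients of the step-function interpolation to those of $u_n$ via a sinc factor: $\widehat{\Xi_n^K}(\eta)\propto C_n(\eta)\widehat{u_n}(\eta\bmod n)$ with $C_n(\eta)=\prod_j \mathrm{sinc}(\pi\eta_j/n)$.

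For the finite-dimensional distributions, since $\langle a_n\Xi_n^K,f\rangle$ is centred Gaussian only convergence of variances must be checked. By Parseval this reduces to
\[
\EE\bigl[|\langle a_n\Xi_n^K,f\rangle|^2\bigr] \;=\; \sum_{\eta\ne 0} a_n^2\,|C_n(\eta)|^2\,\frac{\widehat{K_n}(\eta\bmod n)}{|\lambda_n(\eta\bmod n)|^2}\,|\widehat f(\eta)|^2,
\]
up to overall normalisation constants absorbed into $a_n$. The Taylor expansion $1-\cos\theta = \theta^2/2 + O(\theta^4)$ gives the pointwise limits $a_n^2/|\lambda_n(\xi)|^2\to \|\xi\|^{-4}$ and $C_n(\eta)\to 1$; the lower bound $1-\cos\theta \ge 2\theta^2/\pi^2$ on $[-\pi,\pi]$ provides the uniform majoration $a_n^2/|\lambda_n(\xi)|^2 \le C\|\xi\|^{-4}$; and the standing hypothesis $\sup_{n,\xi}\widehat{K_n}(\xi)<\infty$ handles the kernel. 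Dominated convergence against the finite series $\sum_\eta\|\eta\|^{-4}|\widehat f(\eta)|^2$ (valid for $f\in C^\infty(\TT^d)$) then identifies the limit with $\|f\|_K^2$, matching the characteristic functional $\Phi$ of $\Xi^K$ from \eqref{eq:def_Xi}.

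Tightness in $H_K^{-\eps}(\TT^d)$ is established by applying the same Fourier-side estimate to the random Sobolev norm itself. With $w_\eps(\eta)$ denoting the weight built from $\widehat K$ and $\|\eta\|^{-2\eps}$ that defines the $H_K^{-\eps}$-norm via the abstract Wiener space construction in Subsection~\ref{subsec:AWS}, one obtains
\[
\EE\bigl[\|a_n\Xi_n^K\|^2_{H_K^{-\eps}}\bigr]\;\le\; C\sum_{\eta\ne 0}|C_n(\eta)|^2\,\frac{\widehat{K_n}(\eta\bmod n)}{\|\eta\bmod n\|^4}\,w_\eps(\eta).
\]
The threshold $\eps>d/2$ guarantees summability of the low-frequency tail in the continuum limit (and hence that $\Xi^K$ itself lives in $H_K^{-\eps}$), while the second requirement $\eps>d/4+1$ absorbs the contribution of the aliased high frequencies $\|\eta\|\gtrsim n$, where $C_n(\eta)$ decays only like $\|\eta\|^{-1}$ and $\lambda_n$ ceases to be comparable to $a_n\|\cdot\|^2$. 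Uniform boundedness in $n$ of this second moment, together with the separability of $H_K^{-\eps}$ and the finite-dimensional convergence, yields the claimed weak convergence.

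The main obstacle I anticipate is precisely this high-frequency/aliasing analysis in the tightness step: both the quadratic approximation of $\lambda_n$ and the identification $\eta\bmod n\equiv\eta$ fail for $\|\eta\|\gtrsim n$, so the tail sum must be controlled by combining the polynomial decay of the sinc factor with a careful accounting of the aliasing pattern. This is what forces the more restrictive exponent $\eps>d/4+1$ and is the delicate technical point of the proof; dovetailing it with the $K$-dependent norm $\|\cdot\|_K$ and its Cameron--Martin/Sobolev scale from Subsection~\ref{subsec:AWS} is where most of the bookkeeping lies.
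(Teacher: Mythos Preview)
Your overall two-step plan---Gaussianity reduces finite-dimensional convergence to a variance computation, then tightness in the Sobolev scale---is exactly the paper's strategy. The implementation differs: the paper does not use your sinc/aliasing decomposition. Instead it replaces $\int_{B(z,1/2n)}f$ by $n^{-d}f(z)$ plus an error $E_n(z)$, so the variance becomes a sum over $\xi\in\ZZ_n^d$ (no aliasing) of $\widehat{K_n}(\xi)\,|\widehat{f_n}(\xi)|^2/\lambda_\xi^2$; the limit is identified via the two-sided eigenvalue bound (Lemma~\ref{lem:bound_eig}) and, in $d\ge 4$, a mollifier trick. Your direct dominated-convergence argument using the rapid decay of $\widehat f$ for smooth $f$ would also work (the paper notes a related shortcut in a remark), so that part of your route is a legitimate simplification. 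One omission in your displayed variance: Parseval on $\TT^d$ produces off-diagonal terms with $\eta\ne\eta'$ but $\eta\equiv\eta'\bmod n$, since those Fourier modes of $\Xi_n^K$ share the same $\widehat{u_n}$; they are harmless because $\widehat f$ decays super-polynomially, but they are not ``normalisation constants''.

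The real discrepancy is your reading of the two thresholds on $\eps$. In the paper they play \emph{reversed} roles from what you describe. The condition $\eps>d/4+1$ is not an aliasing/tightness requirement at all: it is Lemma~\ref{lem:AWS} with $a=-1$, i.e.\ the Hilbert--Schmidt condition making $\|\cdot\|_{K,-\eps}$ a measurable norm on the Cameron--Martin space $H_K^{-1}$, hence the condition for the \emph{limit} $\Xi^K$ to live in $H_K^{-\eps}$. The tightness proof (Theorem~\ref{tight}) uses only $\eps>d/2$: after bounding the inner quantity $\sum_{x,y}n^{-4}\EE[\chi_{nx}\chi_{ny}]F_{n,\xi}(x)\overline{F_{n,\xi}(y)}$ by a constant uniformly in $n$ and $\xi$ (via $|F_{n,\xi}|\le n^{-d}$ and Parseval on $\ZZ_n^d$), one is left with $\sum_{\xi\ne 0}\widehat K(\xi)\|\xi\|^{-2\eps}$. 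No high-frequency aliasing analysis is needed, so the ``main obstacle'' you anticipate does not arise in the paper's argument. If you pursue your sinc-based route you will of course meet aliasing, but the constraint it produces is not the $d/4+1$ one; that threshold is purely about the abstract Wiener space.
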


\paragraph{\em Structure of the paper} We will give an overview of the needed results on divisible sandpiles and Fourier analysis on the torus in Section~\ref{sec:notation}. The proof of the main result will be shown in Section~\ref{sec:proof_main}. In Section~\ref{sec:examples} we discuss two classes of examples. In the first class, we consider weights with summable covariances, leading to a bi-Laplacian scaling limit. In the second class the limiting odometer is a fractional field of the form $(-\Delta)^{- s/2 }W$, $s>2$.
\section{Preliminaries}\label{sec:notation}
%%%%%%%%%%%%%%%%%%%%%%%%%%%%%%%%%%%%%%%%%%%%%%%%%%%%%%%%%%%%%%%%%%5
\subsection{Fourier analysis on the torus}
\label{subsec-fourier-torus}
%%%%%%%%%%%%%%%%%%%%%%%%%%%%%%%%%%%%%%%%%%%%%%%%%%%%%%%%%%%%%%%%%%%%%%
We will use the following inner product for  $\ell^2(\mathbb{Z}^d_n)$:

\[
	\langle {f,g} \rangle 
	=
	\frac{1}{n^d} \sum_{z \in \mathbb{Z}^d_n} 
	f(z) \overline{g(z)}.
\]
\added[id=wio,remark=]{Let $\Delta_g$ denote the graph Laplacian defined by
\[
\Delta_g f(x) = \frac{1}{2d} \sum_{\|y - x\|=1} f(y) - f(x).
\]
}
Consider the Fourier basis of the same space  given by the \added[id=ale3,remark={nr4. It is right that eigenfunctions have a plus sign}]{eigenfunctions of the Laplacian} $\{\psi_w\}_{w\in \mathbb{Z}^d_n}$ with
\begin{equation}\label{def-fourier-basis-discrete}
	\psi_w(z)=\psi^{(n)}_w(z):=\exp \bigg( 2\pi \i z\cdot\frac{w}{n}\bigg).
\end{equation}
The corresponding eigenvalues $\{\lambda_w\}_{w\in \mathbb{Z}^d_n}$ are given by
\begin{equation}\label{eq:eigenvalues}
 \lambda_w:=- \frac{4}{2d}\sum_{i=1}^d\sin^2\bigg(\frac{\pi w_i}{n}\bigg).
\end{equation}
Given $f \in \ell^2(\mathbb{Z}^d_n)$, we define its discrete Fourier transform by
\[
	\widehat{f}(w) 
=
	\langle  f, \psi_{w}\rangle
= 
	\frac{1}{n^d}\sum_{z \in \mathbb{Z}^d_n} 
	f(z) \exp \bigg(-2\pi \i z\cdot\frac{w}{n}\bigg)  
\]
for $w\in \mathbb{Z}^d_n$.
Similarly, if $f, g \in L^2(\mathbb{T}^d)$ we will denote
\[
	(f,g)_{L^2(\mathbb{T}^d)}:= \int_{\mathbb{T}^d} f(z) \overline{g(z)}
	\text{d}z.
\]
Consider the Fourier basis $\{\phi_{\xi}\}_{{\xi} \in \mathbb{Z}^d}$ of $L^2(\TT^d)$ given by 
$$\phi_{\xi}(x):=\exp(\added[id=ale3,remark={nr4}]{}2\pi \i \xi \cdot x)$$ 
and denote 
\[
	\widehat{f}(\added[id=wio,remark=]{\xi}):=(f,\phi_{\added[id=wio,remark=]{\xi}})_{L^2(\mathbb{T}^d)} = 
	\int_{\mathbb{T}^d} f(z) \ee^{- 2 \pi \i \added[id=wio,remark=]{\xi} \cdot z} \text{d}z.
\]
It is important to notice that for $f \in C^\infty(\mathbb{T}^d)$, if we define $f_n: \mathbb{Z}^d_n \to \mathbb{R}$ by $f_n(z) := f( {z}/{n})$, then for all $\added[id=wio,remark=]{\xi} \in \mathbb{Z}^d$, $\widehat{f_n}(\added[id=wio,remark=]{\xi}) \to \widehat{f}(\added[id=wio,remark=]{\xi})$
as $n\rightarrow \infty$. 

Finally, we write $C^\infty(\TT^d)/\sim$ for the space of smooth functions with zero mean, that is, the space of smooth functions modulo the equivalence relation of differing by a constant. 

\subsection{Abstract Wiener Spaces and continuum fractional Laplacians}\label{subsec:AWS}
%%%%%%%%%%%%%%%%%%%%%%%%%%%%%%%%%%%%%%%%%%%%%%%%%%%%%%%%%%%%%%%%%%%%%%%%%%%%%
In this Subsection our aim is to define the appropriate negative Sobolev space in which the convergence of Theorem~\ref{main} occurs. To do so, we repeat the classical construction of abstract Wiener spaces as done in \cite{Silvestri, CHR17}.

\begin{lemma}
Let $\widehat{K}:\ZZ^d\to\RR_{>0}$ be the limiting Fourier multiplier \added[id=ale3,remark={nr5}]{as defined in Theorem~\ref{main}}.
For $a < 0$ the following
\begin{equation}\label{mad_a}
(f,g)_{K,a} := \sum_{\xi\in\ZZ^d\setminus\{0\}} \widehat{K}(\xi) \|\xi\|^{4a} \widehat{f}(\xi) \overline{\widehat{g}(\xi)}
\end{equation}
is an inner product on $C^{\infty}(\TT^d)/\sim.$
\end{lemma}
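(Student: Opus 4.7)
My plan is to verify the four defining properties of an inner product on the quotient space $C^\infty(\TT^d)/\sim$: absolute convergence of the series, sesquilinearity, conjugate symmetry, and positive definiteness (modulo the constants we have quotiented out). All four boil down to elementary properties of Fourier series once the hypotheses on $\widehat K$ stated in Theorem~\ref{main} are invoked.

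First I would establish absolute convergence of the series defining $(f,g)_{K,a}$ for smooth $f,g$. Two facts combine: by the hypothesis $\sup_{n,\xi}\widehat{K_n}(\xi)<\infty$ together with the pointwise limit~\eqref{eq:K_hat}, the multiplier $\widehat K$ is uniformly bounded on $\ZZ^d$; and for $a<0$ the factor $\|\xi\|^{4a}$ tends to zero as $\|\xi\|\to\infty$, so the weight $\widehat{K}(\xi)\|\xi\|^{4a}$ is itself bounded on $\ZZ^d\setminus\{0\}$. Since $f,g\in C^\infty(\TT^d)$ have Fourier coefficients decaying faster than any polynomial, the Cauchy--Schwarz inequality in $\ell^2$ applied to the Fourier coefficients gives absolute summability. (Note that the excluded mode $\xi=0$ is precisely the only point at which $\|\xi\|^{4a}$ would blow up, so the restriction to $\ZZ^d\setminus\{0\}$ is exactly what makes the expression well-defined.)

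Next, sesquilinearity in the two arguments is immediate from the linearity of the Fourier transform $f\mapsto \widehat f$, and conjugate symmetry $(f,g)_{K,a}=\overline{(g,f)_{K,a}}$ follows from the fact that $\widehat K(\xi)$ and $\|\xi\|^{4a}$ are real. Positive semi-definiteness is then clear: each summand equals $\widehat{K}(\xi)\|\xi\|^{4a}|\widehat{f}(\xi)|^2\ge 0$, using the crucial assumption $\widehat K(\xi)>0$ from~\eqref{eq:K_hat}.

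The only point requiring a remark is non-degeneracy, and this is also where the quotient $C^\infty(\TT^d)/\sim$ becomes essential. If $(f,f)_{K,a}=0$, then $\widehat K(\xi)\|\xi\|^{4a}|\widehat f(\xi)|^2=0$ for every $\xi\in\ZZ^d\setminus\{0\}$; since both $\widehat K(\xi)>0$ and $\|\xi\|^{4a}>0$, this forces $\widehat f(\xi)=0$ for all nonzero $\xi$, so $f$ is constant and therefore zero in the quotient. I do not anticipate any genuine obstacle here: the entire lemma is a direct translation of the hypotheses on $\widehat K$ into the language of Fourier series, and the only subtlety worth flagging explicitly is the role of the hypothesis $\widehat K(\xi)>0$, without which positive definiteness could fail on modes where $\widehat K$ vanishes.
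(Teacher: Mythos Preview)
Your proof is correct and follows essentially the same approach as the paper's: verify linearity and conjugate symmetry directly, use the boundedness of $\widehat K$ together with $a<0$ (and square-summability of Fourier coefficients) for convergence, and use $\widehat K(\xi)>0$ for positive definiteness and non-degeneracy on the quotient. The paper's argument is more terse---it appeals to $f\in L^2(\TT^d)$ rather than to rapid Fourier decay of smooth functions---but the structure and all the key ingredients are the same.
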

\begin{proof}
The linearity and conjugate symmetry are immediate. Furthermore, since \eqref{eq:K_hat} holds it follows that 
\[ (f,f)_{K,a} = \sum_{\xi\in\ZZ^d\setminus\{0\}} \widehat{K}(\xi) \|\xi\|^{4a} |\widehat{f}(\xi)|^2 > 0, \]
where the sum converges because $a<0$ and $f\in (C^\infty(\TT^d)/\sim) \subset (L^2(\TT^d)/\sim)$. On the other hand, if we have $(f,f)_{K,a}=0$ then we must have $\widehat{f}(\xi)=0$ for all $\xi\in\ZZ^d\setminus\{0\}$ and so $f \equiv 0$. 
\end{proof}

Define $H^a_K(\TT^d)$ to be the Hilbert space completion of $C^\infty(\TT^d)/\sim$ with respect to the norm $\|\cdot\|_{K,a}$. Our goal is to define a Gaussian random variable $\Xi^K$ such that for all $f\in C^{\infty}(\TT^d)/\sim$ we have $\langle\Xi^K,f\rangle\sim\mathcal{N}(0,\|f\|_{K,a}^2)$. We do this by constructing an appropriate abstract Wiener space for $\Xi^K$. We first of all recall the definition of such a space (see~\citet[\S 8.2]{Str08}). 

\begin{definition}\label{aws2}
A triple $(H,B,\mu)$ is called an abstract Wiener space (from now on abbreviated {AWS}) if
\begin{enumerate}
\item $H$ is a Hilbert space with inner product $(\cdot,\cdot)_H$.
\item $B$ is the Banach space completion of $H$ with respect to the measurable norm $\|\cdot\|_B$. Furthermore $B$ is supplied with the Borel $\sigma$-algebra $\Bb$ induced by $\|\cdot\|_B$.
\item $\mu$ is the unique probability measure on $B$ such that for all $\phi\in B^*$ we have $\mu\cdot\phi^{-1} = \mathcal{N}(0,\|\widetilde{\phi}\|_H^2)$, where $\widetilde{\phi}$ is the unique element of $H$ such that $\phi(h)=(\widetilde{\phi},h)_H$ for all $h\in H$.
\end{enumerate}
\end{definition}

\noindent In order to construct a measurable norm $\|\cdot\|_B$ as above it is sufficient to construct a Hilbert--Schmidt operator on $H$ and set $\|\cdot\|_B := \|T\cdot\|_H$. 
For $a\in \mathbb{R}$ define the \textit{continuum fractional Laplace operator} $(-\Delta)^a$ acting on $L^2(\mathbb{T}^d)$ functions $f$ w.r.t. the orthonormal basis $\{\phi_{\nu}\}_{\nu \in \mathbb{Z}^d}$ as
\[
(-\Delta)^a f(x) := \sum_{\nu\in \mathbb{Z}^d\setminus \{0\}} \|\nu\|^{2a} \widehat{f}(\nu) \phi_{\nu}(x).
\] 
We would like to make two remarks at this point. The first one is that a priori the above operator is not defined for all $L^2(\mathbb{T}^d)$-functions for all values of $a\in \mathbb{R}$. In fact we will construct appropriate Sobolev spaces formally consisting of $L^2(\mathbb{T}^d)$ functions $f$ such that $(-\Delta)^a f(x)$ is again square-integrable.
The second remark concerns the need mean zero test functions in order to cancel the atom at $\nu=0$ which arises from taking an inverse ($a<0$) of the Laplacian in the definition above.

In the following \replaced[id=ale2]{Lemma we will construct an orthonormal basis on $H^{a}_K(\TT^d)$}{ we construct an orthonormal basis on $H^{a}_K(\TT^d)$} \deleted[id=wio,remark=]{and prove that it is indeed a Hilbert space}. We set 
\begin{align*}
f_\xi &:=\widehat{K}(\xi)^{-1/2} (-\Delta)^{-a} \phi_\xi = \widehat{K}(\xi)^{-1/2} \|\xi\|^{-2a} \phi_\xi. 
\end{align*}

\begin{lemma}
$\{ f_\xi \}_{\xi\in\ZZ^d\setminus\{0\}}$ is an orthonormal basis of $H^{a}_K(\TT^d)$ under the norm $\|\cdot\|_{K,a}$.
\end{lemma}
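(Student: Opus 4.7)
The plan is to verify orthonormality by a direct Fourier computation and then derive completeness from the density of $C^\infty(\TT^d)/\sim$ in $H^a_K(\TT^d)$ combined with the Parseval-type formula defining the norm $\|\cdot\|_{K,a}$.

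First I would compute the Fourier coefficients of $f_\xi$. Since $\phi_\xi$ is itself an element of the Fourier basis of $L^2(\TT^d)$, orthonormality of $\{\phi_\nu\}_{\nu\in\ZZ^d}$ gives $\widehat{f_\xi}(\nu)=\widehat{K}(\xi)^{-1/2}\|\xi\|^{-2a}\delta_{\xi,\nu}$. Plugging this into the inner product \eqref{mad_a} collapses the sum to a single term:
\[
(f_\xi,f_\eta)_{K,a}=\widehat{K}(\xi)\|\xi\|^{4a}\cdot\widehat{K}(\xi)^{-1/2}\|\xi\|^{-2a}\cdot\widehat{K}(\eta)^{-1/2}\|\eta\|^{-2a}\delta_{\xi,\eta},
\]
which equals $\delta_{\xi,\eta}$. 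In particular each $f_\xi$ is smooth and mean zero, hence genuinely an element of $C^\infty(\TT^d)/\sim\subset H^a_K(\TT^d)$.

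Next I would show the family is total. For any $f\in C^\infty(\TT^d)/\sim$ the usual Fourier expansion $f=\sum_{\xi\neq0}\widehat{f}(\xi)\phi_\xi$ can be rewritten, using $\phi_\xi=\widehat{K}(\xi)^{1/2}\|\xi\|^{2a}f_\xi$, as
\[
f=\sum_{\xi\in\ZZ^d\setminus\{0\}}\bigl(\widehat{K}(\xi)^{1/2}\|\xi\|^{2a}\widehat{f}(\xi)\bigr)f_\xi.
\]
Truncating at $\|\xi\|\le N$ and using the orthonormality already established, the tail has squared $\|\cdot\|_{K,a}$-norm equal to $\sum_{\|\xi\|>N}\widehat{K}(\xi)\|\xi\|^{4a}|\widehat{f}(\xi)|^2$. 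This tail vanishes because the full sum is finite: $\widehat{f}$ decays rapidly (as $f$ is smooth), $\sup_\xi\widehat{K}(\xi)<\infty$ by the hypothesis of Theorem~\ref{main}, and $a<0$ so $\|\xi\|^{4a}$ is summable against any polynomially bounded quantity. Consequently the partial sums converge to $f$ in $H^a_K(\TT^d)$, i.e.\ $C^\infty(\TT^d)/\sim$ lies in the closed linear span of $\{f_\xi\}$. Since $C^\infty(\TT^d)/\sim$ is itself dense in $H^a_K(\TT^d)$ by definition of the completion, totality follows.

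The step I expect to demand the most care is the convergence of the Fourier series in the $\|\cdot\|_{K,a}$-norm, because one must leverage the global bound $\sup_n\sup_\xi\widehat{K_n}(\xi)<\infty$ transferred (via the pointwise limit \eqref{eq:K_hat}) to $\widehat{K}$, together with the fact that for a smooth function $\widehat{f}$ is rapidly decreasing, in order to dominate the series despite the $\|\xi\|^{4a}$ weights (here harmless since $a<0$). Everything else — the orthonormality computation and the density argument — is essentially formal once the Fourier coefficients of $f_\xi$ are identified.
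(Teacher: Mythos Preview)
Your proof is correct, and the orthonormality computation is identical to the paper's. For completeness, however, the paper proceeds differently: instead of arguing via density of $C^\infty(\TT^d)/\sim$, it takes an arbitrary $g\in H^a_K(\TT^d)$, chooses a Cauchy sequence $(g_n)\subset C^\infty(\TT^d)/\sim$ converging to $g$, shows that for each fixed $\xi$ the coefficients $\widetilde{g_n}(\xi):=(g_n,f_\xi)_{K,a}$ form a Cauchy sequence with limit $\widetilde{g}(\xi)$, sets $h:=\sum_\xi\widetilde{g}(\xi)f_\xi$, verifies $\|h\|_{K,a}<\infty$ via a truncation and $\sup_n\|g_n\|_{K,a}<\infty$, and finally invokes Fatou's lemma to get $g_m\to h$, hence $g=h$. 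Your route---showing the closed linear span contains a dense subspace---is the standard abstract argument and is shorter; the paper's approach has the modest advantage of explicitly exhibiting the expansion coefficients of a general element of the completion, but requires handling limits of Cauchy sequences directly. One small remark: the tail-convergence step you flag as delicate is in fact immediate, since $\|f\|_{K,a}^2<\infty$ for smooth mean-zero $f$ was already established in the preceding lemma, so no extra appeal to the uniform bound on $\widehat{K}$ is needed there.
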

\begin{proof}
First we observe that the $f_\xi$'s are orthogonal:
\[ 
\begin{split}
(f_k,f_\ell)_{K,a} &= \sum_{\xi\in\ZZ^d\setminus\{0\}} \widehat{K}(\xi) \|\xi\|^{4a} \left(\widehat{K}(\xi)\right)^{-1/2} \|\xi\|^{-2a} \1_{\xi=k}\left(\widehat{K}(\xi)\right)^{-1/2} \|\xi\|^{-2a} \1_{\xi=\ell}  = \1_{k=\ell}. 
\end{split}
\]
Next we show that all $g\in H^a_K(\TT^d)$ have a Fourier expansion in the $f_\xi$'s. Indeed, choose any $g\in H^a_K(\TT^d)$; then by definition there exists a Cauchy sequence $\{g_n\}_{n\in \NN}$ in $C^\infty(\TT^d)/\sim$ such that $\|g_n - g \|_{K,a} \to 0$ as \added[id=ale3,remark={nr6}]{$n\to\infty$}. As $\{ g_n \}_{n\geq 1}$ is convergent under $\|\cdot\|_{K,a}$ we have $\sup_{n\in\NN} \|g_n\|_{K,a}^2 < \infty$. 
Denote by
\begin{equation*}
\widetilde{F}_{\xi} := (F, f_{\xi})_{K,a}, \qquad F : \TT^d \rightarrow \RR.
\end{equation*}
We have for $\xi\in\ZZ^d\setminus\{0\}$ fixed,
\[ |\widetilde{g_\ell}(\xi) - \widetilde{g_m}(\xi)|^2 \leq \sum_{\xi\in\ZZ^d\setminus\{0\}} |\widetilde{g_\ell}(\xi) - \widetilde{g_m}(\xi)|^2 = \|g_\ell - g_m\|_{K,a}^2 \to 0 \]
\added[id=ale3,remark={nr7}]{as $\ell,\,m\to\infty$.}
So in fact for $\xi$ fixed, $\{\widetilde{g_n}(\xi)\}_{n\geq 1}$ is a Cauchy sequence and thus has a limit $\widetilde{g}(\xi)$. We define $h := \sum_{\xi\in\ZZ^d\setminus\{0\}} \widetilde{g}(\xi) f_\xi$. Note we have for all $\ell \in\NN$:
\[ 
\begin{split}
\sum_{\xi\in\ZZ^d\setminus\{0\}} |\widetilde{g}(\xi)|^2 \1_{\xi\in\ZZ^d_\ell} & = 
\added[id=wio,remark=]{ \lim_{n\rightarrow \infty}  \sum_{\xi \in \mathbb{Z}^d\setminus \{0\}} |\widetilde{g}_n(\xi)|^2 \1_{\xi\in\ZZ^d_\ell}} \\
& \added[id=wio,remark=]{\leq \lim_{n\rightarrow \infty} \| g_n \|^2_{K,a}} \leq \sup_{n\in\NN} \|g_n\|^2_{K,a} <\infty. 
\end{split}
\]
Therefore $\| h\|_{K,a}^2 <\infty$, so $h\in H^a_K(\TT^d)$. Moreover, we have $g_m \to h$ \added[id=ale3,remark={nr8}]{as $m\to\infty$} in $H^a_K(\TT^d)$. This can be seen by applying Fatou's lemma:
\[ \|g_m - h\|_{K,a}^2 = \sum_{\xi\in\ZZ^d\setminus\{0\}} |\widetilde{g_m}(\xi) - \widetilde{g}(\xi)|^2 \leq \liminf_{\ell\to\infty} \sum_{\xi\in\ZZ^d\setminus\{0\}} |\widetilde{g_m}(\xi) - \widetilde{g_\ell}(\xi)|^2 \to 0. \]
In this way we see that we must have $g = h = \sum_{\xi\in\ZZ^d\setminus\{0\}} \widetilde{g}(\xi) f_\xi$, so $g$ has a $f_\xi$-Fourier expansion.
\end{proof}

Next we will define a Hilbert--Schmidt operator on $H^a_K(\TT^d)$. 
\begin{lemma}\label{lem:AWS}
Let $\eps> {d}/{4}-a$ and $a<0$. The norm defined by
\begin{equation}
\|\cdot\|_B:=\| (-\Delta)^{-(\eps + a)} \cdot \|_{K,a}
\end{equation}
is a Hilbert--Schmidt norm on $H_K^a(\TT^d)$.
\end{lemma}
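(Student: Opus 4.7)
The plan is to verify that $T:=(-\Delta)^{-(\eps+a)}$ is a Hilbert--Schmidt operator on $H_K^a(\TT^d)$: once this is established, $\|T\cdot\|_{K,a}$ is by definition a Hilbert--Schmidt norm. Since both $(-\Delta)^b$ (for any $b\in\RR$) and the norm $\|\cdot\|_{K,a}$ are diagonal in the Fourier basis, the most efficient route is to test the Hilbert--Schmidt condition against the orthonormal basis $\{f_\xi\}_{\xi\in\ZZ^d\setminus\{0\}}$ of $H_K^a(\TT^d)$ exhibited in the previous lemma, and verify
\[
\|T\|_{HS}^2 = \sum_{\xi\in\ZZ^d\setminus\{0\}}\|T f_\xi\|_{K,a}^2<\infty.
\]

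First I would compute $T f_\xi$ explicitly. Since $(-\Delta)^b\phi_\nu=\|\nu\|^{2b}\phi_\nu$ by the spectral definition of the continuum fractional Laplacian, and $f_\xi=\widehat K(\xi)^{-1/2}\|\xi\|^{-2a}\phi_\xi$, one finds
\[
T f_\xi = \widehat K(\xi)^{-1/2}\|\xi\|^{-2a}(-\Delta)^{-(\eps+a)}\phi_\xi = \widehat K(\xi)^{-1/2}\|\xi\|^{-2(\eps+2a)}\phi_\xi.
\]
Since the Fourier coefficient of $T f_\xi$ in the continuum basis $\{\phi_\eta\}$ is supported only at $\eta=\xi$, applying the definition of $\|\cdot\|_{K,a}$ yields
\[
\|T f_\xi\|_{K,a}^2 = \widehat K(\xi)\,\|\xi\|^{4a}\cdot \widehat K(\xi)^{-1}\|\xi\|^{-4(\eps+2a)} = \|\xi\|^{-4(\eps+a)}.
\]
Notice the pleasant cancellation: the Fourier multiplier $\widehat K(\xi)$ appearing in both the normalization of $f_\xi$ and in the definition of $\|\cdot\|_{K,a}$ exactly compensates, so the final bound is independent of $K$.

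Summing,
\[
\|T\|_{HS}^2 = \sum_{\xi\in\ZZ^d\setminus\{0\}}\|\xi\|^{-4(\eps+a)},
\]
which converges precisely when $4(\eps+a)>d$, i.e.\ when $\eps>d/4-a$, matching the hypothesis. This shows $T$ is Hilbert--Schmidt on $H_K^a(\TT^d)$, so $\|\cdot\|_B=\|T\cdot\|_{K,a}$ is a Hilbert--Schmidt (in particular measurable in the Gross sense) norm. There is no serious obstacle: the only subtlety is making sure that $(-\Delta)^{-(\eps+a)}$ acts diagonally in the same basis in which both the inner product $(\cdot,\cdot)_{K,a}$ and the orthonormal basis $\{f_\xi\}$ are diagonal, which is what allows the Fourier weight $\widehat K$ to drop out and reduces the whole computation to the usual comparison of a zeta-type series with the critical exponent $d/4$.
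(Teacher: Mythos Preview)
Your proof is correct and follows essentially the same approach as the paper: both test the Hilbert--Schmidt condition against the orthonormal basis $\{f_\xi\}$, obtain $\|Tf_\xi\|_{K,a}^2=\|\xi\|^{-4(\eps+a)}$ via the cancellation of $\widehat K(\xi)$, and reduce to summability of $\sum_{\xi\neq 0}\|\xi\|^{-4(\eps+a)}$. The only addition in the paper is a short final computation identifying the completion under $\|\cdot\|_B$ with $H_K^{-\eps}(\TT^d)$, which is not part of the lemma statement proper.
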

\begin{proof}
First of all, note that for $a > b$ we have $\|\cdot\|_{K,b} \leq \|\cdot\|_{K,a}$, so $H^a_K(\TT^d)\subset H^b_K(\TT^d)$. Now recall that $T$ is a Hilbert--Schmidt operator on a Hilbert space $H$ if, for $\{f_i\}_{i\neq0}$ an orthonormal basis of $H$, we have
\[ \sum_{i=1}^\infty \| T f_i \|_H^2 <\infty. \]
Set $T := (-\Delta)^{b-a}$. We have the following for all $\nu\in\ZZ^d\setminus\{0\}$:
\[ \| (-\Delta)^{b-a} f_\nu\|_{K,a}^2 = \sum_{\xi\in\ZZ^d\setminus\{0\}} \widehat{K}(\xi) \|\xi\|^{4a} \left| \widehat{K}(\xi)^{-1/2} \|\xi\|^{2(b-a)} \|\xi\|^{-2a} \1_{\nu=\xi} \right|^2 = \|\nu\|^{4(b-a)}.  \]
In this way, we see that
\[ \sum_{\nu\in\ZZ^d\setminus\{0\}} \| T f_\nu \|_{K,a}^2 = \sum_{\nu\in\ZZ^d\setminus\{0\}} \|\nu\|^{4(b-a)} < \infty, \]
if and only if $4(b-a) < -d$ which is equivalent to $b<- {d}/{4} +a$.  We write $-\eps := b < 0$ with $\eps >  {d}/{4}-a$. Note that the Banach space completion of $H^{a}_K$ with respect to the measurable norm $\| (-\Delta)^{-(\eps+a)} \cdot \|_{K,a}$ is exactly $H^{-\eps}_K(\TT^d)$. Indeed, we have
\begin{align*}
 \| g \|^2_B = \| (-\Delta)^{-(\eps+a)} g \|^2_{K,\,a} &= \sum_{\xi\in\ZZ^d\setminus\{0\}} \widehat{K}(\xi) \|\xi\|^{4a} \left | \|\xi\|^{-2(\eps +a)} \widehat{g}(\xi) \right |^2  \\
 &= \sum_{\xi\in\ZZ^d\setminus\{0\}} \widehat{K}(\xi) \|\xi\|^{4a} \|\xi\|^{-4(\eps+a)} |\widehat{g}(\xi)|^2 \\
 &= \sum_{\xi\in\ZZ^d\setminus\{0\}} \widehat{K}(\xi) \|\xi\|^{-4\eps} |\widehat{g}(\xi)|^2.\qedhere 
 \end{align*}
\end{proof}
\begin{definition}[Definition of the limit field]Our AWS is the triple $(H^{a}_K,H^{-\eps}_K,\mu_{-\eps})$ where $\eps$ is as in Lemma~\ref{lem:AWS}. We will choose from now on $a:=-1$ and denote $\|\cdot\|_{K,\,-1}$ simply as $\|\cdot\|_K$. The measure $\mu_{-\epsilon}$ is the unique Gaussian law on $H_K^{-\eps}$ whose characteristic functional is given in~\eqref{eq:def_Xi}. The field associated to $\Phi$ will be called $\Xi^K$. 
\end{definition}

%%%%%%%%%%%%%%%%%%%%%%%%%%%%%%%%%%%%%%%%%%%%%%%%%%%%%%%%%%%%5
\subsection{Covariance kernels}
We are going to show that positive, real Fourier coefficients on $\ZZ_n^d$ correspond to a positive definite function $(x,y)\mapsto K_n(x-y)$ on $\ZZ_n^d\times\ZZ_n^d$ by proving the analog of Bochner's theorem on the discrete torus.
\begin{lemma}\label{lem:posdef}
The function $(x,y)\mapsto K_n(x,y)=K_n(x-y)$ on $(\ZZ_n^d)^2$ is \added[id=ale3,remark={nr9}]{symmetric and} positive definite, and thus a well-defined covariance function, if and only if the Fourier coefficients $\widehat{K_n}$ are real-valued, symmetric and positive.
\end{lemma}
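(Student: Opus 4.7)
The plan is to prove this as the finite-group analogue of Bochner's theorem on the discrete abelian group $\ZZ_n^d$, exploiting the orthogonality relation $\sum_{w\in\ZZ_n^d}\psi_w(z)\overline{\psi_w(z')} = n^d \1_{z=z'}$ together with the Fourier inversion formula $K_n(z) = \sum_{w\in\ZZ_n^d}\widehat{K_n}(w)\psi_w(z)$, with $\psi_w$ as in~\eqref{def-fourier-basis-discrete}.

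For the ``$\Leftarrow$'' direction, I would start from $\widehat{K_n}$ real, symmetric and strictly positive, and invert the Fourier transform to express $K_n$; reality of $K_n$ and the identity $K_n(z) = K_n(-z)$ (hence symmetry of the kernel $(x,y)\mapsto K_n(x-y)$) then drop out of the symmetry and reality of $\widehat{K_n}$. Substituting the Fourier expansion into a generic quadratic form and swapping the order of summation gives
\[
\sum_{i,j=1}^k c_i\overline{c_j}\, K_n(x_i - x_j) \;=\; \sum_{w\in\ZZ_n^d}\widehat{K_n}(w)\left|\sum_{i=1}^k c_i\psi_w(x_i)\right|^2 \;\geq\; 0,
\]
which displays the quadratic form as a nonnegative combination of squares and thus yields the desired positive definiteness; strictness is inherited from $\widehat{K_n}(w)>0$ together with the fact that the matrix $(\psi_w(x_i))_{w,i}$ has no nontrivial kernel.

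For the ``$\Rightarrow$'' direction, the trick is to apply the hypothesis of positive definiteness to the specific test vector $c_x := \overline{\psi_w(x)}$ indexed by $x \in \ZZ_n^d$. Collecting terms by $z = x-y$ and using that exactly $n^d$ ordered pairs realise each $z$, one recognises the discrete Fourier transform and obtains
\[
0 \;\leq\; \sum_{x,y\in\ZZ_n^d}\overline{\psi_w(x)}\psi_w(y)\,K_n(x-y) \;=\; n^{2d}\,\widehat{K_n}(w),
\]
so $\widehat{K_n}(w) > 0$. Reality and symmetry of $\widehat{K_n}$ follow by plugging the hypotheses $K_n(z)\in\RR$ and $K_n(-z) = K_n(z)$ directly into the definition of the discrete Fourier transform.

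There is no genuine analytic obstacle, as this is a standard Bochner-type equivalence on a finite abelian group; the only points requiring care are keeping the normalisation constants of the Fourier pair consistent with Subsection~\ref{subsec-fourier-torus}, and being careful to distinguish ``positive semidefinite'' from the strict ``positive definite'' version of the statement (the latter is the one matching the strict positivity $\widehat{K}(\xi) > 0$ invoked in~\eqref{eq:K_hat}).
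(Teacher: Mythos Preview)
Your proposal is correct and follows essentially the same route as the paper: both arguments hinge on the Fourier diagonalisation identity
\[
\sum_{x,y\in\ZZ_n^d} K_n(x-y)\,c(x)\overline{c(y)} \;=\; n^{2d}\sum_{\xi\in\ZZ_n^d}\widehat{K_n}(\xi)\,|\widehat{c}(\xi)|^2,
\]
which is exactly the finite-group Bochner computation you describe. The only cosmetic differences are that the paper writes out the ``$\Rightarrow$'' direction for a generic real $c$ and then infers positivity of each $\widehat{K_n}(\xi)$, whereas you plug in the specific character $c_x=\overline{\psi_w(x)}$ directly; and the paper omits the ``$\Leftarrow$'' direction entirely as analogous, while you spell it out. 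One small wrinkle to tidy: in your displayed inequality for the ``$\Rightarrow$'' direction you wrote $0\le$ but then conclude strict positivity --- since the character is a nonzero test vector and the hypothesis is strict positive definiteness, the inequality is in fact strict, so just replace $\le$ by $<$.
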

\begin{proof}
Assume first that $K_n$ is symmetric and positive definite. Then we have for any function \added[id=ale3,remark={nr4. We must restrict to real-valued $c$'s}]{$c:\ZZ_n^d\to\RR$} that is not the zero function that
\[ 0 < \sum_{x,y\in\ZZ_n^d} K_n(x-y) c(x) \overline{c(y)}.\]
We then find
\begin{eqnarray*}
\sum_{x,y\in\ZZ_n^d} K_n(x-y) c(x){c(y)} &=& \sum_{x\in\ZZ_n^d} c(x) \sum_{y\in\ZZ_n^d} K_n(x-y) {c(y)} \\
&=& n^d \sum_{x\in\ZZ_n^d} c(x) \sum_{\xi\in\ZZ_n^d} \exp\left (2\pi\i x \cdot \frac{\xi}{n}\right)\widehat{K_n}(\xi) {\widehat{c}(\xi)} \\
& = & n^{2d} \sum_{\xi \in \ZZ^d_n} \widehat{K}_n(\xi) \widehat{c}(\xi) \left (\frac{1}{n^d}\sum_{x\in \ZZ^d_n}  c(x) \exp\left (2\pi\i x \cdot \frac{\xi}{n}\right) \right )\\
&=& n^{2d} \sum_{\xi\in\ZZ_n^d} \widehat{K_n}(\xi) \widehat{c}(\xi)^2 > 0.
\end{eqnarray*}

As this needs to hold for all functions $c:\ZZ_n^d\to\RR$, we necessarily have $\widehat{K_n}(\xi) \in\RR_{>0}$. Since $K_n$ is symmetric, we also have 
\[ \widehat{K_n}(\xi) = \frac{1}{n^d} \sum_{z\in\ZZ_n^d} K_n(z) \exp\left(-2\pi\i z \cdot \frac{\xi}{n}\right) = \frac{1}{n^d} \sum_{z\in\ZZ_n^d} K_n(-z) \exp\left(-2\pi\i (-z) \cdot \frac{\xi}{n}\right) = \widehat{K_n}(-\xi), \]
thus $\widehat{K_n}$ is symmetric on $\ZZ_n^d$. The other direction of the proof can be obtained in a similar way hence we will omit the proof here.
\end{proof} 
\subsection{Divisible sandpile model and odometer function}
A divisible sandpile configuration $s=(s(x))_{x\in \mathbb{Z}^d_n}$ is a map $s:\mathbb{Z}^d_n \rightarrow \mathbb{R}$ where $s(x)$ can be interpreted as the \textit{mass} or \textit{hole} at vertex $x\in \mathbb{Z}^d_n$. 
It is known \citep[Lemma~7.1]{LMPU} that for all initial configurations $s$ such that $\sum_{x\in \mathbb{Z}^d_n} s(x)=n^d$ the model will stabilize to the all 1 configuration.

We consider in this paper initial divisible sandpile configurations of the form~\eqref{eq:def_s}
where $(\sigma(x))_{x\in \mathbb{Z}^d_n}$ are multivariate Gaussians with mean 0 and stationary covariance matrix ${K}_n$ given by
\begin{equation}\label{eq:cov_sigma}
\mathbb{E}(\sigma(x)\sigma(y)) = {K}_n(x,\,y)
\end{equation}
where $K_n$ is as in Theorem~\ref{main}.
Let us remark that putting \added[id=ale3,remark=nr10]{${K_n}(z):=\1_{z=0}$} retrieves the case when the $\sigma$'s are i.i.d. 

We will study the following quantity. Let $u_n=(u_n(x))_{x\in \mathbb{Z}^d_n}$ denote the \textit{odometer} \citep[Section~1]{LMPU} corresponding to the divisible sandpile model specified by the initial configuration \eqref{eq:def_s} on $\mathbb{Z}^d_n$. In words, $u_n(x)$ denotes the amount of mass exiting from $x$ during stabilization. Let 
\[
g(x,y)=\frac{1}{n^d}\sum_{z\in \mathbb{Z}^d_n} g^z(x,y)
\]
with $g^z(x,y)$ the expected amount of visits of a simple random walk on $\ZZ^d_n$ starting from $x$ and visiting $y$ before being killed at $z$. The following characterization of $u_n$ is similar to ~\citet[Proposition~1.3]{LMPU} and follows a close proof strategy.
\begin{lemma}\label{lem:eta_field}
Let $(\sigma(x))_{x\in\ZZ_n^d}$ be a collection of centered Gaussian random variables with covariance given in~\eqref{eq:cov_sigma} and consider the divisible sandpile $s$ on $\ZZ_n^d$ given by \eqref{eq:def_s}.
Then the sandpile stabilizes to the all 1 configuration and the distribution of the odometer $u_n(x)$ is given by
\[ u_n \stackrel{d}{=} \left(\eta - \min_{z\in\ZZ_n^d} \eta(z)\right). \]
Here $(\eta(x))_{x\in\ZZ_n^d}$ is a collection of centered Gaussian random variables with covariance
\[ \EE[\eta(x)\eta(y)] = \frac{1}{(2d)^2} \sum_{z,z'\in\ZZ_n^d} K_n(z-z') g(z,x) g(z',y). \]
\end{lemma}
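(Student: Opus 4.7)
The plan is to adapt the strategy of Proposition~1.3 in \cite{LMPU}: characterize the odometer as the unique solution of a discrete Poisson equation on $\ZZ_n^d$ with a normalization condition, exhibit an explicit Gaussian field $\eta$ solving the same equation, and then match the normalizations to identify $u_n$ in distribution.

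First I would establish the Poisson characterization. Since $\sum_{x\in\ZZ_n^d} s(x) = n^d$ by \eqref{eq:def_s}, the configuration stabilizes to the all-$1$ configuration by \citet[Lemma~7.1]{LMPU}. The least action principle (see \citet[Lemma~3.1 and Proposition~1.3]{LMPU}) then identifies $u_n$ as the unique function on $\ZZ_n^d$ satisfying $u_n\ge 0$, $\min_{z\in\ZZ_n^d}u_n(z)=0$, and the discrete Poisson equation
\[
\Delta_g u_n(x) \,=\, 1 - s(x) \,=\, -\sigma(x) + \frac{1}{n^d}\sum_{z\in\ZZ_n^d}\sigma(z).
\]
In particular, any two solutions of this equation differ by a global constant, since harmonic functions on the (connected) torus are constant.

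Next I would write down the candidate Gaussian field. Using the pinned Green's function $g(x,y) = n^{-d}\sum_{z\in\ZZ_n^d} g^z(x,y)$ introduced just above the lemma, define
\[
\eta(x) \,:=\, \frac{1}{2d}\sum_{z\in\ZZ_n^d} g(z,x)\,\sigma(z), \qquad x\in\ZZ_n^d.
\]
Since $(\sigma(x))_x$ is centered multivariate Gaussian with covariance $K_n(\cdot-\cdot)$, the field $(\eta(x))_x$ is also centered Gaussian, and by bilinearity of the covariance
\[
\EE[\eta(x)\eta(y)] \,=\, \frac{1}{(2d)^2}\sum_{z,z'\in\ZZ_n^d} K_n(z-z')\,g(z,x)\,g(z',y),
\]
which is exactly the covariance claimed in the lemma. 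The key algebraic input is the standard identity $-\Delta_g g(\cdot,x)(y) = \delta_{x,y} - n^{-d}$, valid from the killed-walk representation after averaging over the killing vertex $z$; applying $\Delta_g$ to $\eta$ and using $\sum_z(\sigma(z)-n^{-d}\sum_w\sigma(w))=0$ to cancel the $n^{-d}$ term shows that $\eta$ satisfies the same Poisson equation as $u_n$.

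Finally, since $\Delta_g(u_n-\eta)\equiv 0$ on $\ZZ_n^d$, the difference $u_n-\eta$ is a (random) constant $c$. The conditions $u_n\ge 0$ and $\min_z u_n(z)=0$ force $c=-\min_{z\in\ZZ_n^d}\eta(z)$, yielding
\[
u_n \,\stackrel{d}{=}\, \eta - \min_{z\in\ZZ_n^d}\eta(z),
\]
as claimed. I expect the main technical subtlety to lie in the verification that $\eta$ indeed solves the Poisson equation with the correct sign and the correct $1/(2d)$-normalization: this requires reconciling the symmetric normalized graph Laplacian $\Delta_g f(x)=\frac{1}{2d}\sum_{y\sim x}(f(y)-f(x))$ with the combinatorial Green's function $g^z(x,y)$ defined as an expected number of visits, and carefully tracking the averaging over the killing vertex $z$. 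Once this identity is in place, the rest of the proof is a direct consequence of uniqueness up to additive constants together with the $\min u_n = 0$ normalization.
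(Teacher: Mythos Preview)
Your approach is essentially the paper's: characterize $u_n$ via the Poisson equation $\Delta_g u_n = 1-s$ with $\min u_n=0$, produce a Gaussian solution through the averaged Green's function, and match normalizations. There is, however, a genuine simplification in your execution compared to the paper. The paper defines the auxiliary field $v(y)=(2d)^{-1}\sum_z g(z,y)(s(z)-1)$ using $s(z)-1$ rather than $\sigma(z)$; this forces them to expand $\EE[(s(z)-1)(s(z')-1)]=K_n(z-z')-C'/n^d$, obtain a covariance for $v$ that differs from the target by a constant $R$, and then introduce an independent $Y\sim\mathcal N(0,R)$ so that $v+Y\stackrel{d}{=}\eta$. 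Your choice $\eta(x)=(2d)^{-1}\sum_z g(z,x)\sigma(z)$ lands directly on the stated covariance and bypasses the $Y$-trick entirely; since $\eta-v$ is a random constant (because $\sum_z g(z,x)$ does not depend on $x$), your $\eta$ solves the same Poisson equation and the conclusion follows.

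Two small corrections. First, the Green's function identity on $\ZZ_n^d$ with the normalized Laplacian $\Delta_g$ is $-\Delta_g g(\cdot,x)(y)=2d\,(\delta_{x,y}-n^{-d})$, not $\delta_{x,y}-n^{-d}$; this is exactly the $2d$ factor you flagged, and with it one gets $\Delta_g\eta(x)=-\sigma(x)+n^{-d}\sum_z\sigma(z)=1-s(x)$ on the nose. Second, your remark about ``using $\sum_z(\sigma(z)-n^{-d}\sum_w\sigma(w))=0$ to cancel the $n^{-d}$ term'' is not needed and is slightly misleading: once the identity carries the correct $2d$, the computation yields $1-s(x)$ directly, with the $n^{-d}$ term contributing the average $n^{-d}\sum_z\sigma(z)$ rather than cancelling.
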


\begin{proof} 
By Lemma 7.1 in \cite{LMPU} the sandpile stabilizes \added[id=ale3,remark={nr11}]{to the all $1$ configuration} and the odometer $u_n$ satisfies
\[\begin{cases} \Delta u_n(z) = 1-s(z)&z\in\ZZ_n^d\\
 \min_{z\in\ZZ^d_n} u_n(z) = 0 . 
  \end{cases}
 \]
Setting
\added[id=ale3,remark={nr12}]{$$
v^z(y):=\frac{1}{2d}\sum_{z\in\ZZ_n^d} g^z(x,\,y)(s(x)-1)
$$}
and
\added[id=ale3,remark={nr12}]{$v(y) = {n^{-d}} \sum_{z\in\ZZ_n^d} v^z(y) = (2d)^{-1}\sum_{x\in\ZZ_n^d} g(x,y) (s(x)-1)$} we can see as in \citet[Proposition~1.3]{LMPU} that \added[id=ale3,remark={nr13}]{$u_n-v$} is constant. So \added[id=ale3,remark={nr13}]{$ u_n\overset{d}= v + c$} for some constant $c\in\RR$.\deleted[id=ale,remark={nr14}]{, but since {$\min u_n = 0$}, it must hold that $c=0$ a.s.}
Now, since each $v(x)$ is a linear combination of Gaussian random variables, $v$ is again Gaussian with covariance
\begin{equation} \label{cov}
 \EE[v(x)v(y)] = \frac{1}{(2d)^2} \sum_{z,z'\in\ZZ_n^d} g(z,x) g(z',y) \EE[(s(z)-1)(s(z')-1)].
 \end{equation}
Observe that 
\added[id=ale3,remark={nr15}]{$$ \sum_{w\in\ZZ_n^d}K_n(z-w) =  \sum_{w\in\ZZ_n^d} K_n(w) =:C'. $$}
The expectation in the summation~\eqref{cov} can be calculated in the following way:
\begin{align*}
\EE[(s(z)-1)(s(z')-1)]& =   K_n(z-z') - \frac{1}{n^d} \sum_{w\in\ZZ_n^d} [K_n(z'-w) + K_n(z-w)] + \frac{1}{n^{2d}} \sum_{w,w'\in\ZZ_n^d} K_n(w-w')\\
 &= K_n(z-z') - \frac{2C'}{n^d} + \frac{C'n^d }{n^{2d}}  = K_n(z-z') - \frac{C'}{n^d}.
\end{align*}
If we now plug this into \eqref{cov}, we obtain
\begin{eqnarray*}
 \EE[v(x)v(y)]  &=& \frac{1}{(2d)^2} \sum_{z,z'\in\ZZ_n^d} K_n(z-z')g(z,x) g(z',y) - \added[id=ale3,remark={nr16}]{\frac{C'}{n^d (2d)^2}} \left( \sum_{z\in\ZZ_n^d} g(z,x)\right)^2.
 \end{eqnarray*}
Call $$R := \frac{C'}{n^d(2d)^2}\left(\sum_{z\in\ZZ_n^d} g(z,x)\right)^2$$
and define $Y\sim\mathcal{N}(0,R)$ independent of $v$. Then
\begin{equation}
 \label{eq:useY}
 \left( v + Y\right)_{x\in\ZZ_n^d} \stackrel{d}{=} (\eta(x))_{x\in\ZZ_n^d} \end{equation}
where $(\eta(x))_{x\in\ZZ_n^d}$ is a collection of centered Gaussians with 
\[ \EE[\eta(x)\eta(y)] = \frac{1}{(2d)^2} \sum_{z,z'\in\ZZ_n^d} K_n(z-z')g(z,x) g(z',y). \]
Now since $u_n-v$ is constant and $\min u_n = 0$, we conclude from~\eqref{eq:useY} the desired statement:
\[ u_n \stackrel{d}{=} \left (\eta - \min_{z\in\ZZ_n^d}\eta(z) \right ).\qedhere\]
\end{proof}

The next Lemma is concerned with yet another decomposition of the odometer function, namely it allows us to express its covariance in terms of Fourier coordinates. It is the analog of~\citet[Proposition~4]{CHR17}.

\begin{lemma}\label{covv}
Let $u_n:\ZZ_n^d\to\RR_{\ge 0}$ be the odometer function as in Lemma~\ref{lem:eta_field}. Then
\[ u_n \stackrel{d}{=} \left( \chi - \min_{z\in\ZZ_n^d} \chi_z\right).\]
Here $(\chi_{z})_{z\in\ZZ_n^d}$ is a collection of centered Gaussians with covariance
\[ \EE[\chi_x\chi_y] = \sum_{\xi\in\ZZ_n^d\setminus\{0\}} \widehat{K_n}(\xi) \frac{\exp\left(2\pi\i (x-y)\cdot\frac{\xi}{n}\right)}{\lambda_\xi^2}. 
\]
\end{lemma}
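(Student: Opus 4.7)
The approach I would take is to diagonalize the sandpile equation in the Fourier basis $\{\psi_w\}_{w \in \ZZ_n^d}$, bypassing the Green-function representation of Lemma~\ref{lem:eta_field}. The payoff is that the covariance of the initial weights, once Fourier-transformed, becomes essentially diagonal thanks to stationarity of $K_n$.

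Concretely, I would first take the discrete Fourier transform of the defining equation $\Delta u_n = 1 - s = -\sigma + \overline{\sigma}$, where $\overline{\sigma} := n^{-d}\sum_z \sigma(z)$. Using $\widehat{\Delta u_n}(w) = \lambda_w \widehat{u_n}(w)$ and the fact that the right-hand side is mean-zero, I would solve mode-by-mode to obtain $\widehat{u_n}(w) = -\widehat{\sigma}(w)/\lambda_w$ for every $w \in \ZZ_n^d \setminus \{0\}$, while the $w=0$ coefficient remains undetermined by the equation. I would then define
\[ \chi_z := -\sum_{w \in \ZZ_n^d \setminus \{0\}} \frac{\widehat{\sigma}(w)}{\lambda_w}\,\psi_w(z), \]
so that $u_n = \chi + c$ for some constant $c$; the normalization $\min_z u_n(z) = 0$ characterizing the odometer forces $c = -\min_z \chi_z$, yielding $u_n \stackrel{d}{=} \chi - \min_{z \in \ZZ_n^d} \chi_z$. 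Reality of $\chi_z$ follows from $\overline{\widehat{\sigma}(w)} = \widehat{\sigma}(-w)$ combined with $\lambda_{-w} = \lambda_w$ from \eqref{eq:eigenvalues}, via the change of summation variable $w \mapsto -w$.

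For the covariance, the key input I would need is $\EE[\widehat{\sigma}(w)\,\widehat{\sigma}(w')]$. Expanding this and using $\EE[\sigma(z)\sigma(z')] = K_n(z-z')$, the change of variables $(u, v) = (z-z', z')$ would factorize the double sum: the $v$-sum yields $n^d \1_{w+w'=0}$ and the $u$-sum yields $n^d \widehat{K_n}(w)$, producing $\EE[\widehat{\sigma}(w)\widehat{\sigma}(w')] = \widehat{K_n}(w)\,\1_{w' = -w}$. Substituting into the double-sum expression for $\EE[\chi_x \chi_y]$, together with $\lambda_w \lambda_{-w} = \lambda_w^2$ and $\psi_w(x)\psi_{-w}(y) = \exp(2\pi \i (x-y)\cdot w / n)$, would give precisely the claimed Fourier formula.

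The main subtlety — rather than a genuine obstacle — is the careful bookkeeping at the $w=0$ mode: since $\lambda_0 = 0$ the Laplacian is not invertible there, but the mean-zero structure of $-\sigma + \overline{\sigma}$ ensures that $\widehat{u_n}(0)$ is a free parameter, absorbed into the additive constant $c$. Consequently, all sums defining $\chi_z$ and its covariance are naturally restricted to $\ZZ_n^d \setminus \{0\}$, matching the statement of the lemma.
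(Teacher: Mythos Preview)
Your argument is correct and takes a genuinely different route from the paper. The paper starts from the Green-function representation of Lemma~\ref{lem:eta_field}, namely $\EE[\eta(x)\eta(y)]=(2d)^{-2}\sum_{z,z'}K_n(z-z')g(z,x)g(z',y)$, and then applies Plancherel together with the explicit formula $\widehat{g_x}(\xi)=-2d\,n^{-d}\lambda_\xi^{-1}\ee^{-2\pi\i\xi\cdot x/n}$ from \cite{LMPU}; the $\xi=0$ term produces a constant that is discarded by the recentering, and positive-definiteness of the resulting kernel is checked separately at the end. You instead diagonalize the sandpile equation $\Delta u_n=1-s$ directly in the Fourier basis and compute the covariance of $\widehat{\sigma}$ from the stationarity of $K_n$, which bypasses both the Green function and the prior lemma. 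Your approach is more elementary and self-contained: since you construct $\chi$ explicitly as a linear image of the Gaussian vector $\sigma$, its Gaussianity and the positive semi-definiteness of its covariance are automatic, whereas the paper must verify the latter by hand. The paper's route, on the other hand, makes the link with the random-walk Green function transparent and stays closer to the framework of \cite{LMPU,CHR17}. One small point: you still implicitly rely on the characterization $\Delta u_n=1-s$, $\min u_n=0$, which the paper imports from \cite{LMPU} inside the proof of Lemma~\ref{lem:eta_field}; make sure to cite that fact directly, since you are otherwise not invoking that lemma.
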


\begin{proof}
We denote $g_x(\cdot) := g(\cdot,x)$. Subsequently we utilize Plancherel's theorem on the covariance matrix of $\eta$ (see Lemma~\ref{lem:eta_field}) to find
\[
\begin{split}
\EE[\eta(x)\eta(y)] &=\frac{1}{(2d)^2} \sum_{z,z'\in\ZZ_n^d} K_n(z-z') g(z,y) g(z',x) \\
& = \frac{n^d}{(2d)^2} \sum_{z\in\ZZ_n^d} g(z,y) \sum_{\xi\in\ZZ_n^d} \exp\left(2\pi\i z \cdot \frac{\xi}{n}\right)\widehat{K_n}(\xi) \overline{\widehat{g_x}(\xi)}\\
&= \frac{n^{2d}}{(2d)^2} \widehat{K_n}(0) \widehat{g_y}(0) \overline{\widehat{g_x}(0)} +  \frac{n^{2d}}{(2d)^2} \sum_{\xi\in\ZZ_n^d\setminus\{0\}} \widehat{K_n}(\xi) \widehat{g_y}(\xi) \overline{\widehat{g_x}(\xi)}\label{eq:some_line}.
\end{split}
\]
We find $\widehat{g_x}(0) = n^{-d}\sum_{z\in\ZZ_n^d} g(z,x)$, which does not depend on $x$. In this way, we see that the first term is constant, and thus \replaced[id=ale2]{it gives no contribution to the variance of $u_n$ due to the recentering by the minimum}{ vanishes} in a similar way to the proof of Proposition 1.3 of \cite{LMPU} and the proof of Proposition 4 in \cite{CHR17}. Considering now the second summand above, we recall Equation (20) in \cite{LMPU}, which states that for $\xi\neq0$,
\[ \widehat{g_x}(\xi) = -2dn^{-d} \lambda_\xi^{-1} \exp\left(-2\pi\i\xi\cdot \frac{x}{n}\right).\]
We obtain, \added[id=ale2,remark={we need to say here the first term is still there but since it is subtracted in the minimum it doesn't matter}]{up to a constant factor which we ignore due to the recentering,} 
\[ \EE[\chi_x\chi_y] = \sum_{\xi\in\ZZ_n^d\setminus\{0\}} \widehat{K_n}(\xi) \frac{\exp\left(2\pi\i (x-y)\cdot\frac{\xi}{n}\right)}{\lambda_\xi^2}. \]
To show the positive-definiteness of $\EE[\chi_x\chi_y]$ we will show that for any function $c:\ZZ_n^d \to \mathbb{R}$, such that $c$ is not the zero function, $\sum_{x,y\in\ZZ_n^d} \EE[\chi_x\chi_y] c(x) \overline{c(y)} > 0$. First of all, since $K_n(z-z')$ is positive definite, we conclude that $\widehat{K_n}$ is positive by Lemma~\ref{lem:posdef}. Next we find
\begin{eqnarray*}
\sum_{x,y\in\ZZ_n^d} \EE[\chi_x\chi_y] c(x) \overline{c(y)} &=& \sum_{x,y\in\ZZ_n^d} c(x) \overline{c(y)} \sum_{\xi\in\ZZ_n^d\setminus\{0\}} \widehat{K_n}(\xi) \frac{\ee^{2\pi\i (x-y) \cdot \frac{\xi}{n}}}{\lambda_\xi^2} \\
&=& n^{2d} \sum_{\xi\in\ZZ_n^d\setminus\{0\}} \frac{\widehat{K_n}(\xi)}{\lambda_\xi^2} |\widehat{c}(\xi)|^2 > 0.
\end{eqnarray*}
which concludes the proof.
\end{proof}

\section{Proof of Theorem~\ref{main}}\label{sec:proof_main}

In this section we will prove Theorem~\ref{main} using the fact that convergence in distribution for the fields $\Xi_n^K$ is \added[id=ale3,remark={nr19. I expanded this part as it was not clearly stated before and the referee was right in asking for more explanations}]{equivalent to showing~\cite[Section~2.1]{ledoux:talagrand}} 
\begin{itemize}
 \item tightness in $H_K^{-\eps}(\TT^d)$;
 \item characterising the limiting field.
\end{itemize}
\added[id=ale3]{While tightness is deferred to Subsection~\ref{subsec:tight}, we will now characterize the limiting distribution by proving}
that for all mean-zero $f \in C^\infty(\TT^d)/\sim$ we have, as $n$ goes to infinity, 
$$\langle a_n \Xi^K_n,f\rangle\stackrel{d}{\to}\langle\Xi^K,f\rangle\sim\mathcal{N}(0,\|f\|_K^2).$$
Observe first that
\begin{equation} 
\langle a_n \Xi_n^K,f\rangle = 4\pi^2(2d)^{-1}n^{-2} \sum_{z\in\TT_n^d} u_n(nz) \int_{B(z,\frac{1}{2n})} \! f(x) \ \mathrm{d} x 
\end{equation}
is a linear combination of Gaussians, and thus Gaussian itself for each $n$. In order to now prove the convergence $\langle a_n \Xi_n^K,f\rangle \stackrel{d}{\to} \langle\Xi^K,f\rangle$ it is enough to show convergence of the first and second moment of $\langle a_n \Xi_n^K,f\rangle$ for any mean-zero $f\in C^\infty(\TT^d) / \sim$. To this end, note that by Lemma~\ref{covv}
\[ u_n \stackrel{d}{=} (\chi_x + C')_{x\in\ZZ_n^d}, \]
so in fact 
\begin{eqnarray*}
\langle a_n \Xi_n^K,f \rangle &=& 4\pi^2 (2d)^{-1}n^{-2} \sum_{z\in\TT_n^d} u_n(nz) \int_{B(z,\frac{1}{2n})} \! f(x) \ \mathrm{d} x \\
&\stackrel{d}{=}& 4\pi^2(2d)^{-1} n^{-2} \sum_{z\in\TT_n^d} (\chi_{nz} + C') \int_{B(z,\frac{1}{2n})} \! f(x) \ \mathrm{d} x.
\end{eqnarray*}
Because $f$ is a mean-zero function, we can neglect the random constant $C'$ and with a slight abuse of notation we write
\[ \langle a_n \Xi_n^K,f\rangle = 4\pi^2 (2d)^{-1}n^{-2} \sum_{z\in\TT_n^d} \chi_{nz} \int_{B(z,\frac{1}{2n})} \! f(x) \ \mathrm{d} x, \]
with
\[ \EE[\chi_{nz}\chi_{nz'}] = \frac{1}{16 (2d)^2} \sum_{\xi\in\ZZ_n^d\setminus\{0\}} \widehat{K_n}(\xi) \frac{\exp(2\pi\i (z-z') \cdot \xi)}{\left( \sum_{i=1}^d \sin^2\left(\pi \frac{\xi_i}{n}\right)\right)^2}. \]
As we have $\EE[\chi_{nz}] = 0$ for all $z\in\TT_n^d$, it follows that $\EE[\langle a_n \Xi^K_n,f\rangle] = 0$ for all $n$. For the second moment note first that
\begin{eqnarray}
\langle a_n \Xi_n^K,f\rangle^2 &=& 16\pi^4 (2d)^{-2} n^{-4} \sum_{z,z'\in\TT_n^d} \chi_{nz}\chi_{nz'} \int_{B(z,\frac{1}{2n})} \! f(x) \ \mathrm{d} x \int_{B(z',\frac{1}{2n})} \! f(x') \ \mathrm{d} x' \nonumber\\
&=& 16\pi^4 (2d)^{-2}n^{-(2d+4)} \sum_{z,z'\in\TT_n^d} \chi_{nz}\chi_{nz'} f(z)f(z') + 16\pi^4 (2d)^{-2} n^{-(2d+4)} \sum_{z,z'\in\TT_n^d} \chi_{nz}\chi_{nz'} E_n(z) E_n(z') \nonumber\\
&+& 32 \pi^4 (2d)^{-2}n^{-(2d+4)} \sum_{z,z'\in\TT_n^d} \chi_{nz}\chi_{nz'} f(z) E_n(z')\nonumber\\
&=:&16\pi^4 (2d)^{-2}n^{-(2d+4)} \sum_{z,z'\in\TT_n^d} \chi_{nz}\chi_{nz'} f(z)f(z')+\nonumber\\
&&+R_n^2+8\pi^2 (2d)^{-1}n^{-\frac{d+4}{2}}\sum_{z\in\TT_n^d}f(z)\chi_{nz}R_n.
\label{eq:terms}
\end{eqnarray}
Here we have defined\added[id=ale3,remark={nr20.Changed later too}]{}
\begin{align*} 
E_n(z) &:= \left( \int_{B(z,\frac{1}{2n})} \! n^d f(x) \ \mathrm{d}x - f(z)\right),\\
R_n&:=4\pi^2(2d)^{-1}n^{-(d+2)}\sum_{z\in\TT_n^d}\chi_{nz} E_n(z).
\end{align*}
In $E_n$ we essentially approximate the integral with the value at the centre of the box $B(z,\frac{1}{2n})$. The proof will now proceed in two steps: we will first show that the first term of the right-hand side of~\eqref{eq:terms} goes to the desired limiting variance (Proposition \ref{mad1}). Then we will argue in~Proposition \ref{mad2} that the second term in~\eqref{eq:terms} goes to 0 in $L^2$ and, likewise, the third term after an application of the Cauchy--Schwarz inequality. 

\begin{proposition}\label{mad1}
We have 
\[ \lim_{n\to\infty} 16\pi^4(2d)^{-2} n^{-(2d+4)} \sum_{z,z'\in\TT_n^d} f(z)f(z') \EE[\chi_{nz}\chi_{nz'}] = \|f\|_{K,}^2, \]
where $\|\cdot\|_{K}$ is defined in~\eqref{mad_a}.
\end{proposition}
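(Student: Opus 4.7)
The plan is purely computational once we set up the right Fourier identity. First I would substitute the explicit expression for the covariance from Lemma~\ref{covv}: writing $\lambda_\xi=-\tfrac{2}{d}\sum_{i=1}^{d}\sin^{2}(\pi \xi_i/n)$, we have
\[
\EE[\chi_{nz}\chi_{nz'}]=\sum_{\xi\in\ZZ_n^d\setminus\{0\}}\widehat{K_n}(\xi)\,\frac{\ee^{2\pi\ii(z-z')\cdot\xi}}{\lambda_\xi^{2}}.
\]
Exchanging the order of summation and factorising the double sum over $z,z'\in\TT_n^d$ via
\[
\sum_{z,z'\in\TT_n^d} f(z)\,f(z')\,\ee^{2\pi\ii (z-z')\cdot\xi}\;=\; n^{2d}\bigl|\widehat{f_n}(\xi)\bigr|^{2},
\]
where $f_n(w):=f(w/n)$, and collecting the numerical constants ($16\pi^{4}(2d)^{-2}\cdot d^{2}/4=\pi^{4}$), the left-hand side of the proposition reduces to
\[
\pi^{4}\, n^{-4}\sum_{\xi\in\ZZ_n^d\setminus\{0\}}\widehat{K_n}(\xi)\,\frac{\bigl|\widehat{f_n}(\xi)\bigr|^{2}}{\Bigl(\sum_{i=1}^{d}\sin^{2}(\pi \xi_i/n)\Bigr)^{2}}.
\]

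For every fixed $\xi\in\ZZ^d\setminus\{0\}$ the summand converges pointwise to the correct object: $\widehat{K_n}(\xi)\to \widehat{K}(\xi)$ by the standing assumption \eqref{eq:K_hat}; $\widehat{f_n}(\xi)$ is a Riemann sum for $\widehat{f}(\xi)$ and therefore converges to it; and the Taylor expansion $\sin(x)=x+O(x^{3})$ gives $n^{-2}\sum_{i}\sin^{2}(\pi \xi_i/n)\to \pi^{2}\|\xi\|^{2}$, so that the whole term tends to $\widehat{K}(\xi)\|\xi\|^{-4}|\widehat{f}(\xi)|^{2}$, which is exactly the $\xi$-th contribution to $\|f\|_{K}^{2}$.

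Upgrading pointwise convergence of the summands to convergence of the series is the only analytical point of substance. I would apply dominated convergence on the counting measure on $\ZZ^d\setminus\{0\}$, after extending each summand by $0$ outside $\ZZ_n^d$. The elementary estimate $\sin(\pi t)\geq 2|t|$ on $[-1/2,1/2]$ yields
\[
\Bigl(\sum_{i=1}^{d}\sin^{2}(\pi \xi_i/n)\Bigr)^{2}\;\geq\;\frac{16\,\|\xi\|^{4}}{n^{4}},\qquad \xi\in\ZZ_n^d,
\]
so combined with the uniform bound $\sup_{n,\xi}\widehat{K_n}(\xi)<\infty$ the task is reduced to exhibiting a summable majorant for $|\widehat{f_n}(\xi)|^{2}\|\xi\|^{-4}$ that is uniform in $n$. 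Since $f\in C^\infty(\TT^d)/\sim$, the Poisson summation identity $\widehat{f_n}(\xi)=\sum_{k\in\ZZ^d}\widehat{f}(\xi+kn)$ together with the rapid decay $|\widehat{f}(\zeta)|\leq C_N(1+\|\zeta\|)^{-N}$ yields a bound $|\widehat{f_n}(\xi)|\leq C_N(1+\|\xi\|)^{-N}$ which is uniform in $n$, making the required majorant summable. The main obstacle is precisely this uniform-in-$n$ frequency decay of $\widehat{f_n}$; once it is in place, dominated convergence produces $\|f\|_{K}^{2}$ as the limit and concludes the proof.
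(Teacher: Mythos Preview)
Your argument is correct. The Fourier reduction to
\[
\pi^{4}\, n^{-4}\sum_{\xi\in\ZZ_n^d\setminus\{0\}}\widehat{K_n}(\xi)\,\frac{|\widehat{f_n}(\xi)|^{2}}{\bigl(\sum_{i=1}^{d}\sin^{2}(\pi \xi_i/n)\bigr)^{2}}
\]
is exactly how the paper starts, and your pointwise limits for each factor are fine. The aliasing identity $\widehat{f_n}(\xi)=\sum_{m\in\ZZ^d}\widehat{f}(\xi+mn)$ combined with rapid decay of $\widehat f$ does give $|\widehat{f_n}(\xi)|\le C_N(1+\|\xi\|)^{-N}$ uniformly over $n$ and $\xi\in\ZZ_n^d$ (the tail $m\neq 0$ contributes $O(n^{-N})$, which is $\le C(1+\|\xi\|)^{-N}$ since $\|\xi\|\le C_d\, n$ on $\ZZ_n^d$). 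Together with $\sin(\pi t)\ge 2|t|$ and the uniform bound on $\widehat{K_n}$, this yields a summable majorant and dominated convergence goes through in every dimension.

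The paper takes a different route. It sandwiches the eigenvalue factor via Lemma~\ref{lem:bound_eig}, reducing to the quantity $A=\sum_{\xi\neq 0}\widehat{K_n}(\xi)\|\xi\|^{-4}|\widehat{f_n}(\xi)|^2$ plus two error terms $B,C$ that are shown to vanish. For $A$ it only uses the crude uniform bound $|\widehat{f_n}(\xi)|\le\|f\|_{L^1(\TT^d)}$, which makes $\|\xi\|^{-4}$ the dominating function; this is summable only for $d\le 3$, so for $d\ge 4$ the paper introduces a mollifier $\phi_\kappa$ and a double limit $\kappa\to 0$ after $n\to\infty$. Your Poisson-summation bound on $\widehat{f_n}$ replaces both the case split and the mollification by a single dominated-convergence step, and it also makes the separate treatment of the error terms $B,C$ unnecessary since you never expand the eigenvalue factor. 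The paper itself notes, in the Remark following the proof, that exploiting fast Fourier decay of the discretised test function gives an alternative valid in all dimensions; your argument is a clean realisation of that remark.
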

\deleted[id=ale,remark={already explained what Prop. 9 is for}]{The next Proposition tells us that the approximation of the integral is indeed justified:}
\begin{proposition}\label{mad2}
We have that $\lim_{n\to\infty}R_n=0$ in $L^2(\TT^d)$.
\end{proposition}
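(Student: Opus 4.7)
Since $(\chi_{nz})_{z\in\TT_n^d}$ are centered Gaussians we have $\EE[R_n]=0$, so it suffices to prove $\EE[R_n^2]\to 0$. Expanding the square and using Lemma~\ref{covv} yields
\[
\EE[R_n^2]=16\pi^4(2d)^{-2}n^{-(2d+4)}\sum_{z,z'\in\TT_n^d}E_n(z)E_n(z')\,\EE[\chi_{nz}\chi_{nz'}],
\]
which is precisely the bilinear form appearing in Proposition~\ref{mad1}, only with the smooth test function $f$ replaced by the $n$-dependent sequence $E_n$. My plan is to Taylor-expand $E_n$ as a sum of fixed smooth functions multiplied by small powers of $1/n$, and then apply Proposition~\ref{mad1} term by term.

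Because $f\in C^\infty(\TT^d)$, Taylor-expanding $f$ around $z$ inside the cube $B(z,\tfrac{1}{2n})$ and integrating gives the following: odd moments of the cube vanish by symmetry, while $\int_{B(z,1/(2n))}(x_i-z_i)^2\,\mathrm{d}x=\tfrac{1}{12 n^{d+2}}$, so
\[
E_n(z)=\frac{1}{24\,n^2}\Delta f(z)+T_n(z),\qquad \|T_n\|_\infty \le \frac{C(f)}{n^4}.
\]
Iterating the expansion one step further, $T_n=n^{-4}P+U_n$ with $P\in C^\infty(\TT^d)$ and $\|U_n\|_\infty=O(n^{-6})$. Crucially, both $\Delta f$ and $P$ have zero mean on $\TT^d$ (they are combinations of iterated partial derivatives of $f$ and hence integrate to $0$ by integration by parts on a closed manifold).

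Now split $E_n E_n=(24 n^2)^{-2}\Delta f\,\Delta f+(12 n^2)^{-1}\Delta f\,T_n+T_nT_n$. Proposition~\ref{mad1} applied to $\Delta f\in C^\infty(\TT^d)/\sim$ gives
\[
n^{-(2d+4)}\sum_{z,z'}\Delta f(z)\Delta f(z')\,\EE[\chi_{nz}\chi_{nz'}]\ \longrightarrow\ \|\Delta f\|_K^2,
\]
which is finite since $|\widehat{\Delta f}(\xi)|^2=(2\pi)^4\|\xi\|^4|\widehat f(\xi)|^2$ has rapid decay and $\widehat K$ is bounded. The prefactor $n^{-4}/(24)^2$ then makes this principal contribution $O(n^{-4})\to 0$. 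Similarly, applying Proposition~\ref{mad1} to $P$ and controlling $U_n$ by Cauchy--Schwarz together with the single-site bound $\EE[\chi_{nz}^2]=\sum_{\xi\ne 0}\widehat{K_n}(\xi)\lambda_\xi^{-2}=O(n^{\max(4,d)})$ (up to a log at $d=4$, coming from $\lambda_\xi\sim\|\xi\|^2/n^2$) gives a $T_nT_n$-contribution that is $o(1)$. The mixed term is $o(1)$ by Cauchy--Schwarz against the other two.

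Adding the three bounds produces $\EE[R_n^2]=O(n^{-4})\to 0$. The main obstacle is recursive bookkeeping rather than analysis: one must check that every Taylor coefficient arising in the expansion is a smooth, mean-zero function on $\TT^d$ so that Proposition~\ref{mad1} is applicable, but this follows at once from $f\in C^\infty(\TT^d)$ and integration by parts on the torus. In large dimensions one simply pushes the Taylor expansion a few steps further to absorb the combinatorial growth in $d$ of the single-site variance $\EE[\chi_{nz}^2]$.
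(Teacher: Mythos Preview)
Your argument is correct, but it takes a considerably more elaborate route than the paper's. The paper bypasses the Taylor expansion entirely: after writing
\[
\EE[R_n^2]=16\pi^4(2d)^{-2}n^{-(2d+4)}\sum_{z,z'}E_n(z)E_n(z')\,\EE[\chi_{nz}\chi_{nz'}],
\]
it inserts the Fourier expression for the covariance, uses Lemma~\ref{lem:bound_eig} together with the uniform bound on $\widehat{K_n}$ to replace $n^{-4}\lambda_\xi^{-2}\widehat{K_n}(\xi)$ by a constant, and then recognises the remaining double sum as $\sum_{\xi\neq 0}|\widehat{E'_n}(\xi)|^2$. Plancherel on $\ZZ_n^d$ and the elementary estimate $\|E_n\|_{L^\infty}\le Cn^{-1}$ (Lemma~8 of \cite{CHR17}) then give $\EE[R_n^2]\le Cn^{-2}$ directly, uniformly in the dimension.

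Your strategy---expanding $E_n$ in powers of $n^{-2}$ with smooth mean-zero coefficients and feeding each piece back into Proposition~\ref{mad1}---does work and even yields the sharper rate $\EE[R_n^2]=O(n^{-4})$. The trade-off is that the crude single-site bound $\EE[\chi_0^2]=O(n^{\max(4,d)})$ forces the number of Taylor steps to grow with $d$ (as you note at the end), whereas the paper's Plancherel argument needs no such dimension-dependent bookkeeping and uses only the first-order information $\|E_n\|_\infty=O(n^{-1})$. In short: your proof buys a better decay exponent at the cost of an iterative, $d$-dependent expansion; the paper's proof is a two-line spectral estimate valid in every dimension at once.
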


First we will give the proof to Proposition~\ref{mad1}. As a preliminary tool, we need to recall the following bound on the eigenvalues $\lambda_\xi$.
\begin{lemma}[{\citet[Lemma 7]{CHR17}}]\label{lem:bound_eig}
There exists $c>0$ such that for all $n\in \mathbb{N}$ and $w\in \mathbb{Z}^d_n\setminus \{0\}$ we have
\[
\frac{1}{\| \pi w\|^4} \leq n^{-4} \left (\sum_{i=1}^d \sin^2 \left(\frac{\pi w_i}{n} \right) \right )^{-2} \leq \left( \frac{1}{\| \pi w \|^2} + \frac{c}{n^2}\right)^2.
\]
\end{lemma}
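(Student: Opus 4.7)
The statement splits cleanly into a lower and an upper bound, and I would prove each from scalar estimates on $\sin$ restricted to $[-\pi/2,\pi/2]$, using that for $w\in\ZZ_n^d\setminus\{0\}$ every coordinate satisfies $|w_i|\le n/2$ and hence $|\pi w_i/n|\le \pi/2$.

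For the lower bound $\|\pi w\|^{-4}\le n^{-4}\bigl(\sum_i \sin^2(\pi w_i/n)\bigr)^{-2}$, the elementary inequality $|\sin x|\le |x|$ (valid for all real $x$) applied with $x=\pi w_i/n$ and summed over $i$ gives $\sum_i \sin^2(\pi w_i/n)\le \pi^2\|w\|^2/n^2$. Multiplying by $n^2$ yields $n^2\sum_i\sin^2(\pi w_i/n)\le \|\pi w\|^2$; inverting and squaring both positive sides finishes this direction.

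The upper bound is where the real work sits. The crux is the scalar inequality
\[
x^2 \le \sin^2(x)\bigl(1+c_0 x^2\bigr),\qquad |x|\le \pi/2,
\]
for some absolute constant $c_0>0$. I would establish this by a compactness argument: the function $h(x):=(x^2-\sin^2 x)/(x^2\sin^2 x)$ extends continuously across $x=0$ to the value $1/3$ (via the Taylor expansion $\sin^2 x=x^2-x^4/3+O(x^6)$) and is continuous on $[-\pi/2,\pi/2]$ with endpoint value $1-4/\pi^2$. Hence $c_0:=\sup_{|x|\le\pi/2}h(x)$ is finite. Applying the resulting inequality to $x=\pi w_i/n$ and using the cheap bound $w_i^2\le \|w\|^2$ in the coefficient gives
\[
\frac{\pi^2 w_i^2}{n^2}\le \sin^2(\pi w_i/n)\Bigl(1+c_0\frac{\pi^2\|w\|^2}{n^2}\Bigr).
\]
Summing over $i=1,\dots,d$ and rearranging yields
\[
\sum_i\sin^2(\pi w_i/n)\ge \frac{\pi^2\|w\|^2/n^2}{1+c_0\pi^2\|w\|^2/n^2},
\]
so that $\bigl(n^2\sum_i\sin^2(\pi w_i/n)\bigr)^{-1}\le \|\pi w\|^{-2}+c_0/n^2$. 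Squaring both positive sides delivers the claimed upper bound with $c=c_0$.

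The only nontrivial obstacle is the scalar inequality in the upper bound, and even that is routine via compactness. Alternatively one could extract an explicit $c_0$ from Taylor's theorem with remainder for $\sin^2$ on $[-\pi/2,\pi/2]$ — e.g.\ starting from $x^2-\sin^2 x\le x^4/3$, then dividing by $x^2\sin^2 x$ and bounding $\sin^2 x$ from below by $(2x/\pi)^2$ on $[0,\pi/2]$ to obtain a concrete constant — but the compactness route is shorter and produces the inequality in exactly the shape needed above.
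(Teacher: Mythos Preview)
Your argument is correct. The paper does not supply its own proof of this lemma --- it is quoted verbatim from \citet[Lemma~7]{CHR17} and used as a black box --- so there is nothing in the present paper to compare your proof against. Your approach (the trivial bound $|\sin x|\le |x|$ for the lower inequality, and the scalar estimate $x^2\le \sin^2(x)(1+c_0x^2)$ on $[-\pi/2,\pi/2]$ obtained by compactness for the upper one, then replacing $w_i^2$ by $\|w\|^2$ in the correction term before summing) is exactly the standard way to establish this kind of bound and is carried through without gaps.
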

\begin{proof}[Proof of~Proposition~\ref{mad1}]
First of all,

If we now use the notation $f_n :\ZZ_n^d\to\RR$ for $f_n(\cdot) = f\left({\cdot}/{n}\right)$ then we find that
\[ n^{-d} \sum_{z\in\TT_n^d} f(z) \exp(\added[id=wio,remark=]{-} 2\pi\i z\cdot \xi) = \widehat{f_n}(\xi). \]
We have that 
\begin{equation}\label{eqn1}
16\pi^4 (2d)^{-2} n^{-(2d+4)} \sum_{z,z'\in \mathbb{T}^d_n} f(z)f(z')\mathbb{E}(\chi_{nz}\chi_{nz'}) = \pi^4 n^{-4} \sum_{\xi\in\ZZ_n^d\setminus\{0\}}  \frac{\widehat{K}_n(\xi)}{\left( \sum_{i=1}^d \sin^2\left(\pi \frac{\xi_i}{n}\right)\right)^2} |\widehat{f_n}(\xi)|^2.
 \end{equation}
For the next step in the proof we use Lemma~\ref{lem:bound_eig}. Since $\widehat{K}_n$ is non-negative, we have that on the one hand
\begin{equation}\label{eqn1-2}
\eqref{eqn1} \geq \sum_{\xi\in\ZZ_n^d\setminus\{0\}}  \frac{\widehat{K}_n(\xi)}{\| \xi \|^4} |\widehat{f_n}(\xi)|^2
\end{equation}
and on the other that
\begin{equation}\label{eqn3}
\begin{split}
\eqref{eqn1} & \leq \pi^4 \sum_{\xi\in\ZZ_n^d\setminus\{0\}}  \widehat{K}_n(\xi) |\widehat{f_n}(\xi)|^2 \left( \frac{1}{\| \pi \xi \|^2} + \frac{c}{n^2}\right)^2\\
& =: A+ B + C, 
\end{split}
\end{equation}
where
\begin{equation}
\begin{split}
A & := \sum_{\xi\in\ZZ_n^d\setminus\{0\}}  \frac{\widehat{K}_n(\xi)}{\| \xi \|^4} |\widehat{f_n}(\xi)|^2, \\
B & :=  C n^{-2} \sum_{\xi\in\ZZ_n^d\setminus\{0\}}  \frac{\widehat{K}_n(\xi)}{\| \xi \|^2} |\widehat{f_n}(\xi)|^2 ,\\
C& := C n^{-4} \sum_{\xi\in\ZZ_n^d\setminus\{0\}}  \widehat{K}_n(\xi) |\widehat{f_n}(\xi)|^2.\label{eq:three_terms}
\end{split}
\end{equation}
We will show in the following that $A$, which is the right-hand side of~\eqref{eqn1-2}, exhibits the following limit:
\begin{equation}\label{eq_conv}
\lim_{n \rightarrow \infty} \sum_{\xi\in\ZZ_n^d\setminus\{0\}}  \frac{\widehat{K}_n(\xi)}{\| \xi \|^4} |\widehat{f_n}(\xi)|^2 = \|f\|^2_{K}
\end{equation}
and $B,C$ vanish as $n\rightarrow \infty$.
As in \cite{CHR17} we split the proof into two cases. First we give a more direct proof for $d\leq 3$ and then consider $d \geq 4$. We have that $|\widehat{f_n}(\xi)|^2$ is uniformly bounded in $\xi$, so
\[ |\widehat{f_n}(\xi)| \leq n^{-d} \sum_{z\in\TT_n^d} |f(z)| \to \int_{\TT^d} \! |f(x)| \ \mathrm{d}x = \|f\|_{L^1(\TT^d)}<\infty.\]
We can now use the dominated convergence theorem  to obtain that
\[ \lim_{n\to\infty} \sum_{\xi\in\ZZ^d\setminus\{0\}} \1_{\xi\in\ZZ_n^d} \frac{\widehat{K}_n(\xi)}{\|\xi\|^4} |\widehat{f_n}(\xi)|^2  = \sum_{\xi\in\ZZ^d\setminus\{0\}} \frac{\widehat{K}(\xi)}{\|\xi\|^4} |\widehat{f}(\xi)|^2 = \|f\|_{K}^2. \]
In $d\geq 4$ we use a mollifying procedure. Take any $\phi\in\mathcal{S}(\RR^d)$, the space of Schwartz functions, such that $\phi$ is compactly supported on the unit cube $[-{1}/{2},\,{1}/{2})^d$ with integral 1. We write $\phi_\kappa(\cdot):= \kappa^{-d} \phi\left({\cdot}/{\kappa}\right)$ for $\kappa >0$. In order to show the convergence of \eqref{eq_conv}, we split the terms:
\begin{eqnarray*}
\sum_{\xi\in\ZZ^d\setminus\{0\}}\1_{\xi\in\ZZ_n^d} \frac{\widehat{K}_n(\xi)}{\|\xi\|^4} |\widehat{f_n}(\xi)|^2 = \sum_{\xi\in\ZZ^d\setminus\{0\}}\widehat{\phi_\kappa}(\xi) \1_{\xi\in\ZZ_n^d} \frac{\widehat{K}_n(\xi)}{\|\xi\|^4} |\widehat{f_n}(\xi)|^2 \\+ \sum_{\xi\in\ZZ^d\setminus\{0\}}\left(1-\widehat{\phi_\kappa}(\xi)\right) \1_{\xi\in\ZZ_n^d} \frac{\widehat{K}_n(\xi)}{\|\xi\|^4} |\widehat{f_n}(\xi)|^2.
\end{eqnarray*}
Next, we take from \cite{CHR17} the following bound, where $C>0$ is some constant:
\[ \left| \widehat{\phi_\kappa}(\xi) - 1\right|\leq C\kappa \|\xi\|. \]
Plugging this into the above, we find
\begin{eqnarray}
 \left| \sum_{\xi\in\ZZ^d\setminus\{0\}}\left(1-\widehat{\phi_\kappa}(\xi)\right) \1_{\xi\in\ZZ_n^d} \frac{\widehat{K}_n(\xi)}{\|\xi\|^4} |\widehat{f_n}(\xi)|^2\right| \leq \sum_{\xi\in\ZZ_n^d\setminus\{0\}} \left| \widehat{\phi_\kappa}(\xi) - 1\right| \frac{\widehat{K}_n(\xi)}{\|\xi\|^4} |\widehat{f_n}(\xi)|^2\nonumber\\
 \leq C\kappa\sum_{\xi\in\ZZ_n^d\setminus\{0\}} \frac{\widehat{K}_n(\xi)}{\|\xi\|^3} |\widehat{f_n}(\xi)|^2 \leq C\kappa \sum_{\xi\in\ZZ_n^d\setminus\{0\}} |\widehat{f_n}(\xi)|^2.\label{eq:cerchio_magico}
 \end{eqnarray}
 Note that by Plancherel theorem
 \begin{align*}
  \sum_{\xi\in\ZZ_n^d\setminus\{0\}}& |\widehat{f_n}(\xi)|^2\le n^{-d}\sum_{\xi\in\ZZ_n^d}\left|f\left(\frac{\xi}{n}\right)\right|^2=n^{-d}\sum_{\xi\in\TT_n^d}\left|f\left({\xi}\right)\right|^2.
 \end{align*}
Since the right-hand side above converges to $\int_{\TT^d}|f(x)|^2\De x<\infty$, one has
\begin{equation}\label{eq:bound_sum_f}\sum_{\xi\in\ZZ_n^d\setminus\{0\}} |\widehat{f_n}(\xi)|^2<\infty
\end{equation}
uniformly in $n$. Now taking the limit $\kappa\to0$ in~\eqref{eq:cerchio_magico} gives that
 \[ \lim_{\kappa\to0} \limsup_{n\to\infty} \sum_{\xi\in\ZZ^d\setminus\{0\}}\left(1-\widehat{\phi_\kappa}(\xi)\right) \1_{\xi\in\ZZ_n^d} \frac{\widehat{K}_n(\xi)}{\|\xi\|^4} |\widehat{f_n}(\xi)|^2 = 0.\]
 For the other term, we observe that since $\phi_\kappa$ is smooth, $\widehat{\phi_\kappa}(\xi)$ decays rapidly in $\xi$, and since the $\widehat{f_n}(\xi)$ are uniformly bounded, we can use the dominated convergence theorem to obtain
 \[ \lim_{\kappa\to0}\lim_{n\to\infty} \sum_{\xi\in\ZZ^d\setminus\{0\}}\widehat{\phi_\kappa}(\xi) \1_{\xi\in\ZZ_n^d} \frac{\widehat{K}_n(\xi)}{\|\xi\|^4} |\widehat{f_n}(\xi)|^2 = \lim_{\kappa\to0} \sum_{\xi\in\ZZ^d\setminus\{0\}} \widehat{\phi_\kappa}(\xi) \frac{\widehat{K}(\xi)}{\|\xi\|^4} |\widehat{f}(\xi)|^2.\]
We have that $|\widehat{\phi_\kappa}(\xi)|\leq 1$ and $\widehat{\phi_\kappa}(\xi)\to 1$ as $\kappa\to0$. Applying the dominated convergence theorem once more, we see that
\[ \lim_{\kappa\to0} \sum_{\xi\in\ZZ^d\setminus\{0\}} \widehat{\phi_\kappa}(\xi) \frac{\widehat{K}(\xi)}{\|\xi\|^4} |\widehat{f}(\xi)|^2 = \sum_{\xi\in\ZZ^d\setminus\{0\}} \frac{\widehat{K}(\xi)}{\|\xi\|^4} |\widehat{f}(\xi)|^2. \]
To conclude it remains to show that $B,\,C $ as defined in~\eqref{eq:three_terms} vanish. This can be achieved in a similar way to~\citet[proof of Proposition~5]{CHR17}. The key point is to observe that
\[
  \sum_{\xi\in\ZZ_n^d\setminus\{0\}} {\widehat{K_n}(\xi)} |\widehat{f_n}(\xi)|^2 \leq C
\]
is uniformly bounded in $n$, thanks to the uniform upper bound on $\widehat{K_n}(\cdot)$ and~\eqref{eq:bound_sum_f}.
\end{proof}

\begin{remark}
One could construct a different approximation $f_n(\cdot)$ of $f(\cdot/n)$ by considering the Taylor expansion of the latter. Being the supremal error between $f_n$ and $f(\cdot/n)$ of order $n^{-2d}$, one could use the fast decay of the Fourier coefficients of $f_n$ to apply dominated convergence directly in~\eqref{eq_conv}. This would give an alternative proof to the mollifying procedure, valid in all dimensions.
\end{remark}
We continue and prove Proposition~\ref{mad2}.
\begin{proof}[Proof of Proposition~\ref{mad2}]
Let us calculate $\EE[R^2_n]$ in the following way:
\begin{eqnarray*}
\EE[R_n^2] &=& 16\pi^4 (2d)^{-2}n^{-(2d+4)} \sum_{z,z'\in\TT_n^d} \EE[\chi_{nz}\chi_{nz'}] E_n(z) E_n(z')\\
&\leq& n^{-2d} \sum_{z,z'\in\TT_n^d} \sum_{\xi\in\ZZ_n^d\setminus\{0\}} \widehat{K}_n(\xi) \frac{\exp(2\pi\i (z-z')\cdot \xi)}{\|\xi\|^4} E_n(z) E_n(z') \\
&\leq& C n^{-2d} \sum_{\xi\in\ZZ_n^d\setminus\{0\}}\sum_{z,z'\in\TT_n^d} E_n(z) E_n(z')\exp(2\pi\i (z-z')\cdot \xi),
\end{eqnarray*}
because the $\widehat{K}_n(\xi)$ are uniformly bounded and $\|\xi\|\geq 1$. Now write $E'_n(x):=E_n\left({x}/{n}\right)$. Then the term above becomes
\begin{align*}
\sum_{\xi\in\ZZ_n^d\setminus\{0\}} \widehat{E'_n}(\xi) \overline{\widehat{E'_n}(\xi)} &\leq \sum_{\xi\in\ZZ_n^d} \widehat{E'_n}(\xi) \overline{\widehat{E'_n}(\xi)} \\
&= n^{-d} \sum_{\xi\in\ZZ_n^d} E'_n(\xi) \overline{E'_n(\xi)} \\
& \leq \|E_n\|_{L^\infty(\TT^d)}^2 \leq Cn^{-2}\to 0.
\end{align*}
The last inequality relies on the bound given in \citet[Lemma 8]{CHR17}:
\[ \sup_{z\in\TT_n^d} |E_n(z)|\leq Cn^{-1}. \]
This then concludes the proof to Proposition~\ref{mad2}.
\end{proof}

\subsection{Tightness in \texorpdfstring{$H_K^{-\eps}$}{}}\label{subsec:tight}
To complete the proof, we show that the convergence in law $a_n\Xi^K_n \stackrel{d}{\to} \Xi^K$ as $n\to \infty$ holds in the Sobolev space $H_K^{-\eps}(\TT^d)$ for any $\eps>\max\{1+{d}/{4},\,{d}/{2}\}$. We state the following Theorem:
\begin{theorem}\label{tight}
The sequence $(a_n\Xi^K_n)_{n\in\NN}$ is tight in $H_K^{-\eps}(\TT^d)$, \replaced[id=ale3,remark={nr24}]{in fact}{in other words}, for all $\delta>0$ there exists $R_\delta>0$ such that
\[ \sup_{n\in\NN} \PP\left( \left\|a_n\Xi^K_n\right\|_{H_K^{-\frac{\eps}{2}}} \geq R_\delta\right) \leq \delta.\]
\end{theorem}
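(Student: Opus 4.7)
\emph{Proof plan.} I would reduce tightness in $H_K^{-\eps}(\TT^d)$ to a uniform second-moment bound in the stronger space $H_K^{-\eps/2}(\TT^d)$ combined with the compactness of the embedding $H_K^{-\eps/2}\hookrightarrow H_K^{-\eps}$, which is diagonal in the basis $\{f_\xi\}$ of Subsection~\ref{subsec:AWS} with singular values $\|\xi\|^{-\eps}\to 0$. Concretely, it suffices to show
\[
 M:=\sup_{n\in\NN}\EE\!\left[\|a_n\Xi_n^K\|_{K,-\eps/2}^2\right]<\infty;
\]
Markov's inequality with $R_\delta:=\sqrt{M/\delta}$ then yields the stated bound, while the sublevel set $\{\|\cdot\|_{K,-\eps/2}\le R_\delta\}$ is precompact in $H_K^{-\eps}$.

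To obtain $M<\infty$ I would expand by Parseval in the Fourier basis,
\[
 \EE\!\left[\|a_n\Xi_n^K\|_{K,-\eps/2}^2\right]=\sum_{\xi\in\ZZ^d\setminus\{0\}}\widehat K(\xi)\|\xi\|^{-2\eps}\,\EE\!\left[|\widehat{a_n\Xi_n^K}(\xi)|^2\right],
\]
and compute the Fourier coefficients of the piecewise-constant interpolation explicitly:
\[
 \widehat{a_n\Xi_n^K}(\xi)=\frac{a_n}{n^d}\Bigl(\prod_{k=1}^d\mathrm{sinc}\!\bigl(\tfrac{\pi\xi_k}{n}\bigr)\Bigr)\sum_{z\in\TT_n^d}u_n(nz)\,e^{-2\pi\i\xi\cdot z}.
\]
Using Lemma~\ref{covv} (the recentering constant $C'$ only acts on the zero mode, which is excluded) together with the orthogonality $\sum_{z\in\TT_n^d}e^{2\pi\i z\cdot(\eta-\xi)}=n^d\mathbf{1}_{\{\eta\equiv\xi\!\bmod n\}}$, I would obtain, with $\bar\xi\in\ZZ_n^d$ the representative of $\xi$ modulo $n$,
\[
 \EE\!\left[|\widehat{a_n\Xi_n^K}(\xi)|^2\right]=a_n^2\,\widehat{K_n}(\bar\xi)\,\lambda_{\bar\xi}^{-2}\prod_{k=1}^d\mathrm{sinc}^2\!\bigl(\tfrac{\pi\xi_k}{n}\bigr)
\]
(the coefficient vanishing when $\bar\xi=0$). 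Lemma~\ref{lem:bound_eig} gives $a_n^2\lambda_{\bar\xi}^{-2}\le C\|\bar\xi\|^{-4}$ uniformly in $n$, and the standing uniform bounds on $\widehat K$ and $\widehat{K_n}$ reduce the task to showing
\[
 \sup_{n\in\NN}\sum_{\xi\in\ZZ^d\setminus n\ZZ^d}\|\xi\|^{-2\eps}\|\bar\xi\|^{-4}\prod_{k=1}^d\mathrm{sinc}^2\!\bigl(\tfrac{\pi\xi_k}{n}\bigr)<\infty.
\]

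The anticipated main obstacle is this last uniform estimate. My plan is to split $\xi=\bar\xi+n\kappa$ with $\bar\xi\in\ZZ_n^d\setminus\{0\}$ and $\kappa\in\ZZ^d$, and treat separately the non-aliased contribution ($\kappa=0$) and the aliased one ($\kappa\neq 0$). The non-aliased part is bounded by $\sum_{\bar\xi\neq 0}\|\bar\xi\|^{-2\eps-4}$, which is finite whenever $\eps>d/4+1$ (forcing $2\eps+4>d$). For $\kappa\neq 0$, the $n$-periodicity identity $|\sin(\pi\xi_k/n)|=|\sin(\pi\bar\xi_k/n)|$ turns each sinc factor into decay of order $n/|\xi_k|$ in every aliased coordinate; combining this polynomial decay with $\|\xi\|^{-2\eps}$ and using $\|\bar\xi+n\kappa\|\ge Cn\|\kappa\|$ tames the tail precisely under $\eps>d/2$. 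Both regimes together give the hypothesis $\eps>\max\{d/2,\,d/4+1\}$ of Theorem~\ref{main}, after which Markov's inequality closes the argument.
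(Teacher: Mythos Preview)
Your proposal is correct and follows essentially the same strategy as the paper: reduce tightness to a uniform bound on $\EE[\|a_n\Xi_n^K\|_{K,-\eps/2}^2]$ via Markov's inequality, expand the norm in Fourier modes, and control each mode's second moment. Your explicit sinc computation of $\widehat{a_n\Xi_n^K}(\xi)$ is a more concrete variant of what the paper does; there the same quantity is written as $a_n^2\sum_{x,y}\EE[\chi_{nx}\chi_{ny}]F_{n,\xi}(x)\overline{F_{n,\xi}(y)}$ with $F_{n,\xi}(x)=\int_{B(x,1/2n)}\phi_\xi$, and bounded by applying Parseval on $\ZZ_n^d$ twice rather than evaluating the single surviving mode $\bar\xi$ directly.

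One simplification is worth noting: your aliasing split into $\kappa=0$ and $\kappa\neq 0$, with the sinc-decay extraction, is more work than needed. From your own closed formula you already have
\[
\EE\bigl[|\widehat{a_n\Xi_n^K}(\xi)|^2\bigr]=a_n^2\,\widehat{K_n}(\bar\xi)\,\lambda_{\bar\xi}^{-2}\prod_k\mathrm{sinc}^2\!\bigl(\tfrac{\pi\xi_k}{n}\bigr)\le C
\]
uniformly in $n$ and $\xi$, since $a_n^2\lambda_{\bar\xi}^{-2}\le C\|\bar\xi\|^{-4}\le C$ by Lemma~\ref{lem:bound_eig}, $\widehat{K_n}$ is uniformly bounded, and $|\mathrm{sinc}|\le 1$. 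Plugging this back gives $M\le C\sum_{\xi\neq 0}\widehat{K}(\xi)\|\xi\|^{-2\eps}\le C'\sum_{\xi\neq 0}\|\xi\|^{-2\eps}$, finite once $\eps>d/2$. This is exactly how the paper closes; the threshold $\eps>d/4+1$ enters only through the AWS construction (Lemma~\ref{lem:AWS}), not through the tightness estimate itself.
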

\begin{proof} The proof of this Theorem is analogous to the proof of tightness in \citet[Section~4.2]{CHR17}. We first apply Markov's inequality and see
\[ \PP\left(\left\|a_n\Xi^K_n\right\|_{H_K^{-\frac{\eps}{2}}} \geq R_\delta\right) \leq \frac{\EE\left[\left\|a_n\Xi^K_n\right\|_{H_K^{-\frac{\eps}{2}}}^2\right]}{R_\delta^2}.\]
Now whenever we have 
\[ \sup_{n\in\NN} \EE\left[\left\| a_n\Xi^K_n\right\|_{H_K^{-\frac{\eps}{2}}}^2\right] \leq C, \]
the assertion follows as we can choose $R_\delta$ such that
\[ \PP\left( \left\|a_n\Xi^K_n\right\|_{H_K^{-\frac{\eps}{2}}} \geq R_\delta\right) \leq \frac{C}{R_\delta^2}<\delta.\]
We calculate the expectation and obtain
\begin{eqnarray*}
\EE\left[\left\|a_n\Xi^K_n\right\|^2_{H_K^{-\frac{\eps}{2}}}\right] &=& a_n^2 \sum_{\added[id=wio,remark=]{\xi} \in\ZZ_n^d\setminus\{0\}} \sum_{x,y\in\TT_n^d} \frac{\widehat{K}_n(\xi)}{\|\added[id=wio,remark=]{\xi}\|^{2\eps}}  \EE[\chi_{nx}\chi_{ny}] \int_{B(x,\frac{1}{2n})} \!\phi_{\added[id=wio,remark=]{\xi}} (\vartheta) \ \mathrm{d}\vartheta \int_{B(y,\frac{1}{2n})} \! \overline{\phi_{\added[id=wio,remark=]{\xi}} (\vartheta)} \ \mathrm{d}\vartheta 
\end{eqnarray*}
Now define 
\begin{align*}F_{n,\added[id=wio,remark=]{\xi}}:\TT_n^d&\to\RR\\x & \mapsto F_{n, \added[id=wio,remark=]{\xi}}(x):= \int_{B(x,\frac{1}{2n})}\! \phi_{\added[id=wio,remark=]{\xi}}(\vartheta) \ \mathrm{d}\vartheta.\end{align*}
We have that both $\1_{B(x,\frac{1}{2n})}$ and $ \phi_{\added[id=wio,remark=]{\xi}} \in L^2(\TT^d)$ so by Cauchy-Schwarz inequality $F_{n, \added[id=wio,remark=]{\xi}}\in L^1(\TT^d)$. Next we claim that, for some $C'>0$,
\begin{equation}\label{claim}
 \sup_{\added[id=wio,remark=]{\xi} \in\ZZ^d}\sup_{n\in\NN} \left|\sum_{x,y\in\TT_n^d} n^{-4} \EE[\chi_{nx}\chi_{ny}]  F_{n,\added[id=wio,remark=]{\xi}}(x) \overline{F_{n, \added[id=wio,remark=]{\xi}}(y)}\right| \leq C'.
 \end{equation}
Remark that similarly to \citet[Equation (4.5)]{CHR17}, we have \begin{equation}\label{eq:use_bound}
|n^{-4}\lambda_{\xi}^{-2}| \leq C\|\xi\|^{-4}
\end{equation}                                                                 
for some $C>0$. We write $G_{n, \added[id=wio,remark=]{\xi}}:\ZZ_n^d\to\RR$ for $G_{n, \added[id=wio,remark=]{\xi}}(\cdot):=F_{n, \added[id=wio,remark=]{\xi}}\left({\cdot}\,{n}\right)$. Using this, we find
\begin{align*}
&\left| \sum_{x,y\in\TT_n^d} n^{-4} \EE[\chi_{nx}\chi_{ny}]  F_{n, \added[id=wio,remark=]{\xi}}(x) \overline{F_{n, \added[id=wio,remark=]{\xi}}(y)} \right|\\
&= \left| \sum_{x,y\in\TT_n^d} n^{-4} \sum_{z \in\ZZ_n^d\setminus\{0\}} \widehat{K_n}(z) \frac{\exp(2\pi\i (x-y)\cdot z)}{\lambda_z^2} F_{n, \added[id=wio,remark=]{\xi}}(x) \overline{F_{n, \added[id=wio,remark=]{\xi}}(y)} \right| \\
&= \left| \sum_{z \in\ZZ_n^d\setminus\{0\}} n^{-4} \lambda_z^{-2} \widehat{K_n}(z) n^{2d} |\widehat{G_{n,\added[id=wio,remark=]{\xi}}}(z)|^2\right|\\
&\stackrel{\eqref{eq:use_bound}}{\le} C n^{2d} \sum_{z \in\ZZ_n^d\setminus\{0\}} \|z\|^{-4} |\widehat{G_{n, \added[id=wio,remark=]{\xi}}}(z)|^2.
\end{align*}
Here we have exploited the fact that $\sup_{z \in\ZZ_n^d}\widehat{K_n}(z)<\infty$ by~\eqref{eq:K_hat}. Now by the triangle inequality,
\[|F_{n, \added[id=wio,remark=]{\xi}}(w)|  = \left| \int_{B(w,\frac{1}{2n})} \! \phi_{\added[id=wio,remark=]{\xi}}(\vartheta) \ \mathrm{d}\vartheta \right| \leq \int_{B(w,\frac{1}{2n})}\! \ \mathrm{d} \vartheta = n^{-d}.\]
Thus
\begin{eqnarray*}
\sum_{z\in\ZZ_n^d\setminus\{0\}} \|z\|^{-4} |\widehat{G_{n, \added[id=wio,remark=]{\xi}}}(z)|^2 &\leq& \sum_{z\in\ZZ_n^d} |\widehat{G_{n,\added[id=wio,remark=]{\xi}}}(z)|^2 = n^{-d} \sum_{z\in\ZZ_n^d} G_{n, \added[id=wio,remark=]{\xi}}(z) \overline{G_{n, \added[id=wio,remark=]{\xi}}(z)} \\
&=& n^{-d} \sum_{z'\in\TT_n^d} F_{n, \added[id=wio,remark=]{\xi}}(z') \overline{F_{n, \added[id=wio,remark=]{\xi}}(z')} \leq n^{-2d} \sum_{z'\in\TT_n^d} 
\int_{B(z',\frac{1}{2n})}\! |\phi_{\added[id=wio,remark=]{\xi}}(\vartheta)| \ \mathrm{d}\vartheta \\
&\leq& n^{-2d} \|\phi_{\added[id=wio,remark=]{\xi}}\|_{L^1(\TT^d)} = Cn^{-2d}. 
\end{eqnarray*}
We then use this bound to obtain
\[ C n^{2d}  \sum_{z\in\ZZ_n^d\setminus\{0\}} \|z\|^{-4} |\widehat{G_{n,\added[id=wio,remark=]{\xi}}}(z)|^2 \leq Cn^{2d}  n^{-2d} = C. \]
This is a constant that does not depend on $n$ or $\xi$, so the claim (\ref{claim}) is proven. Using it, we have by the Euler-Maclaurin formula and the boundedness of $\widehat K(\cdot)$
\begin{eqnarray*}
\EE\left[\left\|a_n \Xi_n^K\right\|_{H_K^{-\frac{\eps}{2}}}^2\right] = \sum_{\added[id=wio,remark=]{\xi} \in\ZZ^d\setminus\{0\}} \frac{\widehat{K}(\xi)}{\|\added[id=wio,remark=]{\xi}\|^{2\eps}} \sum_{x,y\in\TT_n^d}  \EE[\chi_{nx}\chi_{ny}]  F_{n, \added[id=wio,remark=]{\xi}}(x) \overline{F_{n, \added[id=wio,remark=]{\xi}}(y)} \\
\leq  C'\sum_{k\geq1} k^{d-1-2\eps} \leq C.
\end{eqnarray*}
The last estimate is due to the fact that $-\eps < -{d}/{2}$.
\end{proof}

%%%%%%%%%%%%%%%%%%%%%%%%%%%%%%%%%%%%%%%%%%%%%%%%%%%%%%%%%%%%%%%%
\section{Some examples}\label{sec:examples}

In this Section we want to give some concrete examples of initial distributions and Gaussian fields that can be generated via scaling limits of the odometer.

%%%%%%%%%%%%%%%%%%%%%%%%%%%%%%%%%%%%%%%%%%%%%%%%%%%%%%%%%%%%%%%%%%%%%%5
\subsection{Initial Gaussian distributions with power-law covariance}
We would like to look at the case in which the initial distribution of the $\sigma$'s is $(\sigma(x))_{x\in \mathbb{Z}^d_n} \sim \mathcal N(0,\, K_n)$ when
the covariance matrix $K_n$ is polynomially decaying. As an example consider
\[
K_n^{\pm}(x,y) = \begin{cases} 7 & \text{ if } x=y \\  \pm \| x-y\|^{-3} & \text{ otherwise }.\end{cases}
\]
We choose $K(0)=7$ in order to make the covariance matrix positive definite.
The corresponding realizations of the odometer function are indicated in Figures~\ref{Fig:PosK}-\ref{Fig:NegK} and superposed in Figure~\ref{fig_super}.
\begin{figure}[h]
\centering
 \centering
    \begin{minipage}{.5\textwidth}
\includegraphics[scale=1]{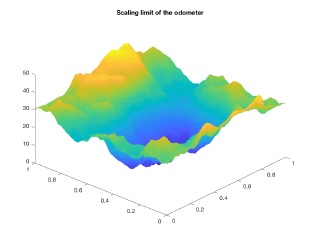}
\caption{Rescaled odometer associated to $K_n^+$.}\label{Fig:PosK}
\end{minipage}%
    \begin{minipage}{.5\textwidth}
\centering
\includegraphics[scale=1]{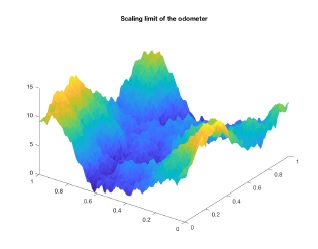}
\caption{Rescaled odometer associated to $K^-_n$.}\label{Fig:NegK}
\end{minipage}
\end{figure}
\begin{figure}[htb]
\includegraphics[scale=0.7]{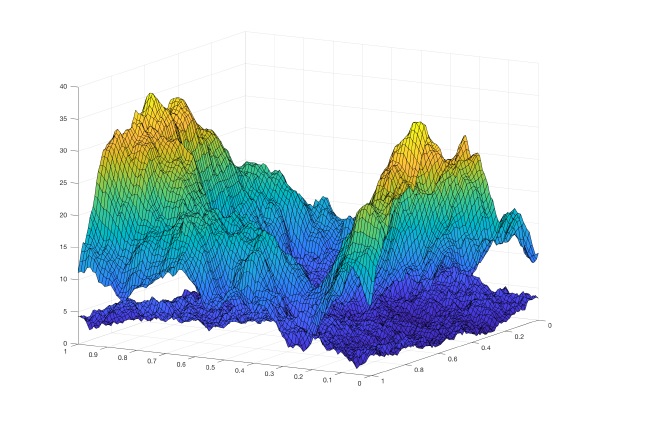}
\caption{Odometer interfaces associated to $K^+_n$ and $K^-_n$.}
\label{fig_super}
\end{figure}

\subsection{A bi-Laplacian field in the limit}The next Proposition shows that, even if~\eqref{eq:K_hat} does not hold, but instead $
 \lim_{n\to\infty} n^d \widehat{K_n}(\xi)$ exists and is finite
for all $\xi$, one can rescale the weights $\sigma$ to go back to the setting of Theorem~\ref{main}. 
\begin{proposition}
Consider the divisible sandpile configuration defined in~\eqref{eq:def_s} with $(\sigma(x))_{x\in \mathbb{Z}^d_n}$ a sequence of centered multivariate Gaussians with covariance $K_n$ satisfying
\begin{itemize}
 \item $\lim_{n\to\infty}K_n(\added[id=wio,remark=]{w})=K(\added[id=wio,remark=]{w})$ exists for all $\added[id=wio,remark=]{w} \in\ZZ^d,$
 \item $K\in\ell^1(\ZZ^d)$,
 \item  $C_K:=\sum_{z\in\ZZ^d}K(z)\neq 0$.
\end{itemize}
Let $(u_n(x))_{x\in\ZZ_n^d}$ be the associated odometer and furthermore $b_n:= 4\pi^2 (2d)^{-1}n^{(d-4)/2}C_K^{-1/2}$. We define the formal field on $\TT^d$ by
\[ \Xi^K_n(x):=  \sum_{z\in\TT_n^d} u_n(nz) \1_{B(z,\frac{1}{2n})}(x), \ \ \ x\in\TT^d. \]
Then $b_n \Xi^K_n $ converges in law as $n\to\infty$ to $\Xi^K=FGF_2(\mathbb{T}^d)$ in the topology of the Sobolev space $H_K^{-\eps}(\TT^d)$, where  $\eps>\max\left\{{d}/{2},\,{d}/{4}+1\right\}$.
\end{proposition}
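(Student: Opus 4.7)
The plan is to reduce to Theorem~\ref{main} by a suitable rescaling of the weights. The key observation is that the map $\sigma\mapsto u_n$ defined via~\eqref{eq:def_s} and the characterization in Lemma~\ref{lem:eta_field} is positively homogeneous: multiplying every $\sigma(x)$ by a deterministic constant $c>0$ multiplies the odometer by the same constant (the $\min$-subtraction is compatible with positive scaling). Setting $c_n:=(n^d/C_K)^{1/2}$, which is real and positive by the third hypothesis and positive-definiteness of $K$ (forcing $C_K=\widehat{K}(0)/\ldots\ge 0$, hence $C_K>0$), I introduce $\tilde\sigma_n(x):=c_n\sigma(x)$ with stationary covariance $\tilde K_n(z)=c_n^2 K_n(z)=\frac{n^d}{C_K}K_n(z)$, and the corresponding odometer $\tilde u_n=c_n u_n$.

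The core analytic step is to verify that $\tilde K_n$ satisfies the hypotheses of Theorem~\ref{main} with limiting multiplier $\widehat{\tilde K}\equiv 1$. A direct computation gives
\[
\widehat{\tilde K_n}(\xi)=\frac{1}{C_K}\sum_{z\in\ZZ_n^d}K_n(z)\exp\!\bigl(-2\pi\i z\cdot\xi/n\bigr).
\]
For each fixed $\xi\in\ZZ^d$ and each fixed $z\in\ZZ^d$, the phase $\exp(-2\pi\i z\cdot\xi/n)\to 1$ as $n\to\infty$, while $K_n(z)\to K(z)$ pointwise by assumption. Using $K\in\ell^1(\ZZ^d)$ as a dominating function (which is the natural integrability one gets from the pointwise convergence together with the summability of the limit; if not built in, I would add the minor hypothesis $|K_n(z)|\le M(z)$ with $M\in\ell^1$), dominated convergence yields $\widehat{\tilde K_n}(\xi)\to C_K^{-1}\sum_{z\in\ZZ^d}K(z)=1$, as well as the uniform bound $\sup_n\sup_\xi\widehat{\tilde K_n}(\xi)\le C_K^{-1}\sup_n\sum_z|K_n(z)|<\infty$.

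Applying Theorem~\ref{main} to $(\tilde\sigma_n)$ gives that $a_n\tilde\Xi_n\stackrel{d}{\to}\Xi^{\tilde K}$ in $H^{-\eps}_{\tilde K}(\TT^d)$ for $\eps>\max\{d/2,d/4+1\}$, where $\Xi^{\tilde K}$ is the Gaussian field with characteristic functional $\exp(-\tfrac12\|f\|^2_{\tilde K})$ and
\[
\|f\|^2_{\tilde K}=\sum_{\xi\in\ZZ^d\setminus\{0\}}\widehat{\tilde K}(\xi)\|\xi\|^{-4}|\widehat f(\xi)|^2=\sum_{\xi\in\ZZ^d\setminus\{0\}}\|\xi\|^{-4}|\widehat f(\xi)|^2,
\]
which is precisely the $H^{-1}(\TT^d)$-norm; hence $\Xi^{\tilde K}=FGF_2(\TT^d)$. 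Matching scalings,
\[
a_n\tilde\Xi_n=a_n\,c_n\,\Xi_n^{K}=\frac{4\pi^2}{2d}\,n^{-2}\cdot n^{d/2}C_K^{-1/2}\,\Xi_n^K=b_n\Xi_n^K,
\]
which is the desired conclusion. Since $\tilde K_n$ is built from $K_n$ only through a scalar multiple, the ambient Sobolev space is the one associated with $\tilde K$; but $\widehat{\tilde K}\equiv 1$ and $\widehat K\equiv 1$ produce the same norm, so the convergence takes place in $H^{-\eps}_K(\TT^d)$ as claimed.

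The main obstacle I expect is not in the algebra of the reduction but in securing a clean dominated convergence argument for $\widehat{\tilde K_n}(\xi)\to 1$: the three bulleted hypotheses give pointwise convergence and $\ell^1$-control on the limit but, strictly speaking, not on the prelimit. Either one supplements them with an $\ell^1$-majorant (the cleanest fix), or one argues via a diagonal/truncation scheme, splitting the Fourier sum into a finite ``core'' $\|z\|\le R$ where pointwise convergence suffices, and a tail $\|z\|>R$ controlled by $\sum_{\|z\|>R}|K(z)|$ plus a vanishing error—this being the only non-bookkeeping part of the proof.
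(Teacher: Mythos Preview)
Your approach is essentially the paper's: rescale the weights by $n^{d/2}$ (you additionally fold in the factor $C_K^{-1/2}$, which is harmless) so that the rescaled Fourier multiplier converges to a positive constant, and then feed this into the machinery of Theorem~\ref{main}. The paper redoes the variance computation of Proposition~\ref{mad1} rather than invoking Theorem~\ref{main} as a black box, but this is cosmetic; more to the point, your caveat about dominated convergence is well taken---the paper's own argument writes $K(w)$ rather than $K_n(w)$ in the Fourier sum and applies DCT with the $\ell^1$ bound on $K$, which either tacitly assumes $K_n=K|_{\ZZ_n^d}$ or shares exactly the gap you flagged.
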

\begin{proof}
The basic idea is to use, rather than the weights $\sigma$ as in the assumptions, the rescaled weights
\[
 \sigma^\prime(x):=n^{d/2}\sigma(x),\,\quad x\in\ZZ^d_n.
\]
The Fourier transform of the associated covariance kernel is now $\widehat{K^\prime_n}=n^{d}\widehat{K_n}$. Now observe that
\begin{align*}
 \widehat{K^\prime_n}(\xi)=\sum_{w\in \ZZ^d}K(w)\1_{w\in\ZZ^d_n}\ee^{\added[id=wio,remark=]{-}2\pi\i w\cdot\xi/n}.
\end{align*}
By dominated convergence and the fact that $K\in\ell^1(\ZZ^d)$ we deduce
\begin{equation}\label{eq:lim_K'_n}
\lim_{n\to\infty} \widehat{K^\prime_n}(\xi)=C_K,\quad\xi\in \ZZ^d.
\end{equation}
\replaced[id=ale,remark={here above we had $\xi\neq 0$ but it holds for $0$ too right?}]{}{a}
We repeat the computation of~\eqref{eq:terms} for $\langle  b_n\Xi^K_n,\,f\rangle^2$ and we obtain as leading term
\[
 16 (2d)^{-2} \pi^4C_K^{-1} n^{-(d+4)} \sum_{z,z'\in\TT_n^d} f(z)f(z') \chi_{nz}\chi_{nz'}.
\]
To compute the variance $\EE[\langle  b_n\Xi^K_n,\,f\rangle^2]$, we have
\begin{eqnarray}
16 (2d)^{-2} \pi^4 C_K^{-1}n^{-(d+4)}&& \sum_{z,z'\in\TT_n^d} f(z)f(z') \EE[\chi_{nz}\chi_{nz'}] \nonumber
\\&=& C_K^{-1}\pi^4 n^{-(d+4)} \sum_{z,z'\in\TT_n^d} f(z)f(z')\sum_{\xi\in\ZZ_n^d\setminus\{0\}} \widehat{K}_n(\xi) \frac{\exp(2\pi\i (z-z') \cdot \xi)}{\left( \sum_{i=1}^d \sin^2\left(\pi \frac{\xi_i}{n}\right)\right)^2} \nonumber\\
&=&C_K^{-1}\pi^4 n^{-(2d+4)}\sum_{\xi\in\ZZ_n^d\setminus\{0\}} \frac{\widehat{K^\prime_n}(\xi)}{\left( \sum_{i=1}^d \sin^2\left(\pi \frac{\xi_i}{n}\right)\right)^2} \sum_{z,z'\in\TT_n^d} f(z) f(z') \ee^{2\pi\i (z-z') \cdot \xi}\label{eq:one_step}.
\end{eqnarray}
From this point onwards the proof of Proposition~\ref{mad1} applies verbatim. In view of~\eqref{eq:lim_K'_n}, the rescaling $C_K^{-1}$ in the variance is done to obtain a limiting field with characteristic functional
\[
 \Phi(f):=\exp\left(-\frac12\sum_{\xi\in \ZZ^d\setminus\{0\}}\|\xi\|^{-4}|\widehat f(\xi)|^2\right)
\]
which identifies the $FGF_2(\TT^d)$.
\end{proof}

%%%%%%%%%%%%%%%%%%%%%%%%%%%%%%%%%%%%%%%%%%%%%%%%%%%%%%%%
\subsection{Fractional limiting fields}
We can now use Theorem \ref{main} to construct $(1+s)$-Laplacian limiting fields for arbitrary $s\in (0,\infty)$. 
Define an initial sandpile configuration such that 
\begin{equation} \label{eq:bil_sim}
\lim_{n\to\infty}\widehat{K_n}(\xi)=\widehat{K}(\xi) := \|\xi\|^{-4s},\quad\xi\in\ZZ^d\setminus\{0\}.
\end{equation}

We do need $\widehat{K}(0)>0$ to ensure positive definiteness of the kernel, so one can choose any arbitrary constant to satisfy this constraint. Using the above Fourier multiplier (observe that it is indeed uniformly bounded for $s\in (0,\infty)$ and hence satisfies~\eqref{eq:K_hat}) and applying Theorem \ref{main}, we find the following limiting distribution: for all $f\in C^\infty(\TT^d)/\sim$ we have that $\langle\Xi^K,f\rangle$ is a centered Gaussian with variance
\begin{eqnarray*}
 \EE\left[\langle\Xi^K,f\rangle^2\right] = \sum_{\xi\in\ZZ^d\setminus\{0\}} \|\xi\|^{-4s} \|\xi\|^{-4} |\widehat{f}(\xi)|^2 &=& \sum_{\xi\in\ZZ^d\setminus\{0\}} \|\xi\|^{-4(s+1)} |\widehat{f}(\xi)|^2\\
 &=& (f,(-\Delta)^{-2(s+1)} f)_{L^2(\TT^d)}\\
& = &\| (-\Delta)^{-(s+1)} f\|^2_{L^2(\mathbb{T}^d)}.
 \end{eqnarray*}

\begin{figure}[h]
\centering
\includegraphics[width=6cm]{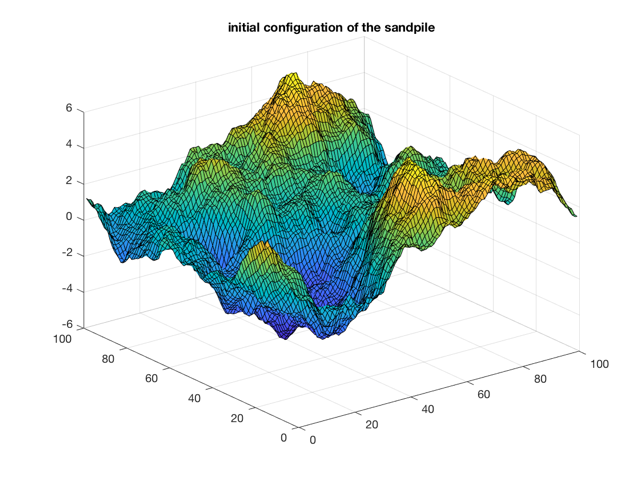}
\hspace{0.7cm}
\includegraphics[width=6cm]{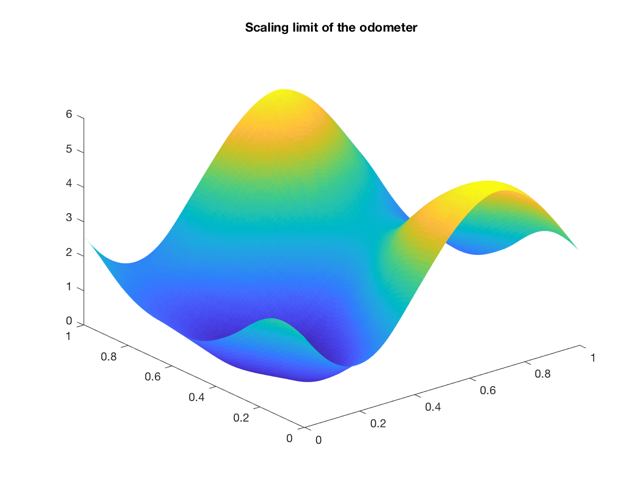}
\caption{The initial configuration $s$ obtained from~\eqref{eq:bil_sim} as in Theorem~\ref{main} and the limiting field of the associated odometer for $s:={1}/{2}$.}
\label{fig2}
\end{figure}

%%%%%%%%%%%%%%%%%%%%%%%%%%%%%%%%%%%%%%%%%%%%%%%%%%%%%%%%%%%%%%%

\subsection{Initial Gaussian distributions with fractional Laplacian covariance}

We have seen in the previous example that if we define an initial configuration on $\ZZ_n^d$ with covariance given by $K_n(\cdot)= \left(\|\cdot\|^{-4s}\right)^\vee$, where the inverse Fourier transform is on $\ZZ_n^d$, the limiting field is Gaussian with variance (for every $f\in C^\infty(\TT^d)$ with mean zero):
\[ \EE\left[\langle\Xi^K,f\rangle^2\right] = (f,(-\Delta)^{-2(s+1)} f)_{L^2(\TT^d)}. \]
However, the covariance $K_n$ does not necessarily agree with that of the discrete $s$-Laplacian field on $\ZZ_n^d$. Namely, we recall the definition of (minus) the discrete fractional Laplacian $(-\Delta_g)^{s}$ on $\ZZ_n^d$ \cite[Section~2.1]{LongRange}:
\begin{equation}\label{fraLap}
 -(-\Delta_g)^{\added[id=wio,remark=]{s/2}}f(x):=\sum_{y\in\ZZ_n^d}(f(x+y)+f(x-y)-2 f(x))p_n^{(s)}(y)
\end{equation}
where the weights $p_n^{(s)}(\cdot)$ are given by
\[ p_n^{(s)}(x,\,y)= p_n^{(s)}(0,\,x-y):=c^{(s)}\sum_{\genfrac{}{}{0pt}{}{z\in\ZZ^d\setminus\{0\}}{z\equiv x-y\text{ mod }\ZZ_n^d}}\|z\|^{-d-s}
\]
and $c^{(s)}$ is the normalizing constant. The above representation has the advantage that we have an immediate interpretation of the fractional graph Laplacian in terms of random walks.

We introduce the powers $(-\Delta_g)^s$ differently from~\eqref{fraLap}, in a way which is more convenient for us. Since our main working tools are Fourier analytical, we will define the discrete $s$-Laplacian $(-\Delta_g)^{-s}$ through its action in Fourier space.
Let $s>0$ and $f\in \ell^2(\ZZ_n^d) $ be such that
\begin{equation}
\label{eq:discrete_f}
 f(\cdot) =\sum_{\nu\in\ZZ^d_n\setminus\{0\}} \widehat{f}(\nu) \psi_\nu(\cdot) 
\end{equation}
where the functions $\psi_\nu$ were defined in~\eqref{def-fourier-basis-discrete} and $\widehat{f}(\nu):=\langle f,\,\psi_\nu\rangle$. We thus define the \emph{discrete fractional Laplacian} as
\begin{equation}\label{defFra2}
(-\Delta_g)^{-s} f(\cdot) := \sum_{\nu\in\ZZ_n^d\setminus\{0\}}(-\lambda_{\nu})^{-s} \widehat{f}(\nu)\psi_\nu(\cdot),\quad f\in\ell^2(\ZZ_n^d)
\end{equation}
having $\lambda_\nu$ as in~\eqref{eq:eigenvalues}. 
Note that for the above expression to be well-defined we need $f$ as in~\eqref{eq:discrete_f}, in other words that $\widehat{f}(0)=0$ which is equivalent to
\begin{displaymath}
\sum_{z\in \mathbb{Z}^d_n} f(z)=0.
 \end{displaymath}
 This space is the discrete analog of $C^\infty(\mathbb{T}^d) /\sim.$
The definition in \eqref{defFra2} resembles one of the possible ways to define the continuum fractional Laplacian \cite{Kwa10} and is akin to the definition of the zero-average discrete Gaussian free field \cite{Aba17}. Indeed, when $s=1$ the two definitions coincide.

\begin{proposition}\label{main4}
Let $s>0$ and let $(u_n(z))_{z\in\ZZ_n^d}$ denote the odometer function associated with the weights $(\sigma(z))_{z\in\ZZ_n^d}$, which are sampled from a jointly Gaussian distribution $\mathcal{N}\left(0,\,a_n^{2s} (-\Delta_g)^{-2s}\right)$ with $a_n:=4\pi^2 (2d)^{-1}n^{-2}$. Let us define the formal field $\Xi_n^K$ by
\[
 \Xi_n^K(x):=\sum_{z\in\TT_n^d} u_n(nz) \1_{B(z,\frac{1}{2n})}(x),\quad x\in\TT^d.
\]

Then as $n\to\infty$
the field $ a_n \Xi_n^K$ converges to $ \Xi^{K}$
in the Sobolev space $H^{-\eps}_{K}(\TT^d)$, $\eps > \max \{d/2,1+d/4\}$.  $\Xi^{K}$ is the Gaussian field on $\TT^d$ such  that for each $f\in C^\infty(\TT^d)/\sim$ we have $$\langle \Xi^{K},f\rangle\sim \mathcal{N}\left(0,\| (-\Delta)^{-(s+1)} f\|_{L^2(\TT^d)}^2\right).$$
\end{proposition}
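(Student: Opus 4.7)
The plan is to reduce Proposition~\ref{main4} to Theorem~\ref{main} by identifying the Fourier multiplier attached to the covariance $a_n^{2s}(-\Delta_g)^{-2s}$, checking its two standing hypotheses, and then reading off the limiting variance.

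\emph{Identifying the multiplier.} Since $(-\Delta_g)^{-2s}$ is diagonalised by $\{\psi_\nu\}_{\nu\in\ZZ_n^d\setminus\{0\}}$ with eigenvalues $(-\lambda_\nu)^{-2s}$ (by~\eqref{defFra2}), the stationary kernel $K_n$ of $\sigma$ is translation-invariant with
\[
\widehat{K_n}(\xi) = a_n^{2s}(-\lambda_\xi)^{-2s},\qquad \xi\in\ZZ_n^d\setminus\{0\}.
\]
The value $\widehat{K_n}(0)$ may be chosen to be any positive constant (to make $K_n$ strictly positive definite); this choice is irrelevant because Lemma~\ref{covv} and the computation~\eqref{eq:terms} only involve the $\xi\neq 0$ modes once we test against mean-zero $f$.

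\emph{Checking the hypotheses of Theorem~\ref{main}.} Writing $t_i := \pi\xi_i/n\in[-\pi/2,\pi/2]$, one has $-\lambda_\xi = (2/d)\sum_{i=1}^d \sin^2 t_i$ and $a_n\|\xi\|^2 = (2/d)\sum_{i=1}^d t_i^2$, so
\[
\widehat{K_n}(\xi) = \left(\frac{\sum_{i=1}^d t_i^2}{\sum_{i=1}^d \sin^2 t_i}\right)^{2s}\|\xi\|^{-4s}.
\]
For fixed $\xi\in\ZZ^d\setminus\{0\}$, as $n\to\infty$ each $t_i\to 0$, the bracketed ratio tends to $1$, and hence $\widehat{K_n}(\xi)\to \|\xi\|^{-4s}=:\widehat{K}(\xi)>0$, giving~\eqref{eq:K_hat}. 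For the uniform upper bound, Jordan's inequality $\sin t\geq (2/\pi)t$ on $[0,\pi/2]$ yields $\sum_i\sin^2 t_i\geq (4/\pi^2)\sum_i t_i^2$, so that
\[
\sup_{n\in\NN}\sup_{\xi\in\ZZ_n^d\setminus\{0\}}\widehat{K_n}(\xi)\leq (\pi^2/4)^{2s}<\infty.
\]

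\emph{Concluding and identifying the limit.} With both hypotheses verified, Theorem~\ref{main} applies directly, yielding convergence in law of $a_n\Xi_n^K$ to $\Xi^K$ in $H_K^{-\eps}(\TT^d)$ for $\eps>\max\{d/2,\,1+d/4\}$. For any mean-zero $f\in C^\infty(\TT^d)/\sim$ the limiting variance reads
\[
\|f\|_K^2 = \sum_{\xi\in\ZZ^d\setminus\{0\}} \|\xi\|^{-4s}\|\xi\|^{-4}|\widehat f(\xi)|^2 = \|(-\Delta)^{-(s+1)}f\|_{L^2(\TT^d)}^2,
\]
which is precisely the variance in the statement. \emph{The hard part} is the uniform upper bound on $\widehat{K_n}$: taken in isolation, $(-\lambda_\xi)^{-2s}$ diverges like $n^{4s}$ for $\xi$ of order $1$, and only the precisely matched prefactor $a_n^{2s}\sim n^{-4s}$ cancels this blow-up. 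This cancellation is the reason for introducing the power $a_n^{2s}$ in the covariance of $\sigma$, and it is what ultimately produces an $FGF_{2(s+1)}(\TT^d)$ in the limit.
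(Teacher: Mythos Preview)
Your proof is correct and follows the same strategy as the paper: verify the two hypotheses of Theorem~\ref{main} on $\widehat{K_n}$ and read off the limiting variance. The only cosmetic difference is that the paper obtains both the uniform bound and the pointwise limit by invoking Lemma~\ref{lem:bound_eig} (the two-sided eigenvalue estimate from \cite{CHR17}) and raising it to the power $s$, whereas you rewrite $\widehat{K_n}(\xi)$ as $\bigl(\sum_i t_i^2/\sum_i\sin^2 t_i\bigr)^{2s}\|\xi\|^{-4s}$ and use Jordan's inequality plus $\sin t/t\to 1$ directly; these are equivalent arguments.
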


\begin{proof}
In the notation of Theorem \ref{main},which we intend to apply here, we have 
\[ \widehat{K_n}(\xi) = \left( \frac{n^2}{4\pi^2} \right)^{-2s} \left(-\lambda_\xi\right)^{-2s},\quad \xi\in\ZZ_n^d\setminus\{0\}. \]
It is immediate that $\widehat{K_n}$ is even and positive. We show that $\widehat{K_n}(\xi)$ is bounded uniformly and converges to $\|\xi\|^{-4s}$. 
As the function $x\mapsto x^{s}$ is strictly increasing for $x\geq 0$, we can apply Lemma 7 from \cite{CHR17} and take $s$-powers such that the inequality still holds. This gives:
\[ \|\pi\xi\|^{-4s} \leq \left( n^{2} \sum_{i=1}^d \sin^2\left(\frac{\pi\xi_i}{n}\right)\right)^{-2s} \leq \left( \|\pi\xi\|^{-2} + cn^{-2} \right)^{2s}. \]
Observe first that since $n$ is fixed and $\|\xi\|\geq 1$, we indeed have that $a_n^{2s} (-\lambda_\xi)^{-2s}$ is uniformly bounded in both $n$ and $\xi$ (recall~\eqref{eq:eigenvalues}). Now taking the limit in the above, we have the convergence of $\widehat{K_n}(\xi)$ to $\|\xi\|^{-4s}.$
Therefore the assumptions of Theorem~\ref{main} are satisfied.
\end{proof}

%%%%%%%%%%%%%%%%%%%%%%%%%%%%%%%%%%%%%%%%%%%%%%%%%%%%%%%%%%%%%55
\bibliographystyle{plainnat}
\bibliography{literaturDSM}

\end{document}